\theoremstyle:=definition,remark,plain\do{%
        \expandafter\g@addto@macro\csname th@\theoremstyle\endcsname{%
            \addtolength\thm@preskip\parskip
            }%
        }
\DeclarePairedDelimiter{\ceil}{\lceil}{\rceil}
\DeclarePairedDelimiter{\floor}{\lfloor}{\rfloor}
\newcommand{\ZZ}{\mathds{Z}}
\newcommand{\RR}{\mathds{R}}
\newcommand{\EE}{\mathds{E}}
\newcommand{\PP}{\mathds{P}}
\newcommand{\G}{\mathcal{G}}
\newcommand{\cN}{\mathcal{N}}
\newcommand{\NN}{\mathds{N}}
\newcommand{\e}{\varepsilon}
\newcommand{\Vol}{\mathrm{Vol}}
\newcommand{\cU}{\mathcal{U}}
\newcommand{\be}{\begin{equation}}
\newcommand{\ee}{\end{equation}}
\newtheorem{thm}{Theorem}[section]
\newtheorem{lemma}[thm]{Lemma}
\newtheorem{prop}[thm]{Proposition}
\newtheorem{Def}[thm]{Definition}
\newtheorem{?}[thm]{Problem}
\theoremstyle{remark} 
\newtheorem{remark}[]{Remark}
\title{Homotopy Types of Random Cubical Complexes}
\author[1]{K. Alex Dowling}
\author[2]{Erik Lundberg\thanks{The author's research is partly supported by the Simons Foundation, under the grant 712397.}}
\affil[1]{Rutgers University, kenneth.dowling@rutgers.edu}
\affil[2]{Florida Atlantic University, elundber@fau.edu}
\begin{document}

\maketitle

\begin{abstract} 
We study the topology of
a random cubical complex associated to Bernoulli site percolation on a cubical grid. We begin by establishing a limit law for homotopy types.  
More precisely, 
looking within an expanding window,
we define a sequence of normalized counting measures 
(counting connected components according to homotopy type),
and we show that this sequence of random probability measures converges in probability to a deterministic probability measure.
We then investigate the dependence of the limiting homotopy measure on the coloring probability $p$,
and our results show a qualitative change in the homotopy measure as $p$ crosses the percolation threshold $p=p_c$.
Specializing to the case of $d=2$ dimensions, we also present empirical results
that raise further questions on the $p$-dependence of the limiting homotopy measure. 

\smallskip

\noindent Keywords: stochastic topology, percolation, phase transition, random complex, homotopy type
\end{abstract}


\section{Introduction}

Imagine coloring each cell in a cubical grid
independently black with probability $p$ and white with
probability $(1-p)$.  Let $\Phi$ denote the interior of the union of the black cubes.
We are interested in the following problem.

\noindent {\bf Problem.}
\emph{Study the asymptotic behavior of the topology of $\Phi$ restricted to an increasingly large viewing window,
and investigate how the outcome depends on $p$.}

This problem fits
into the emerging field of \emph{stochastic topology} investigating the topology of random manifolds and random complexes (the set $\Phi$ can be viewed as a random cubical complex).
See the survey papers \cite{BobrowskiKahle}, \cite{KahleSurvey}.
Motivation for stochastic topology includes applications in areas such as 
topological data analysis \cite{BobrowskiKahlePers}, \cite{HiShTr},
manifold learning \cite{NSW},
complex systems \cite{DayKaliesMis},
and quantum chaos \cite{SarnakWigman}.

Percolation theory has been a major source of inspiration in this area, and the setup for many studies in stochastic topology can be viewed as attaching novel topological statistics to classically-studied percolation models.
For instance, the random clique complexes studied in \cite{KahleClique} are Erd\"os-Renyi random graphs with additional simplicial structure, the random geometric complexes studied in \cite{Kahle2011} coincide with classical models of continuum percolation \cite{MeesterRoy}, and the current paper follows \cite{HiSh2016}, \cite{WerWr2016}, \cite{HiTs} investigating topological aspects of the Bernoulli site percolation model \cite{Grimmett}.

Most of the previous work in stochastic
topology has focused on homology.
In \cite{KahleClique},
Kahle investigated asymptotics for
Betti numbers of clique complexes.
In \cite{Kahle2011}, \cite{BobrowskiMukherjee}, and \cite{YSA} the authors study asymptotics for Betti numbers of random geometric complexes
(simplicial complexes built over random point clouds with faces determined by pairwise distances).
In her thesis \cite{Erika}, Rold\'an investigated the homology of random polynominoes which are a form of random two-dimensional cubical complexes.
Hiraoka and Tsunoda \cite{HiTs} recently studied asymptotics for Betti numbers of random
cubical complexes associated to the site percolation process,
and they prove a law of large numbers and central limit theorem for Betti numbers.
They also showed that the asymptotic law for each Betti number depends continuously on the parameter $p$ (even across the so-called percolation threshold $p=p_c$).

The recent study \cite{ALL} instead focuses on homotopy, which represents a more refined level of information than homology. The authors of \cite{ALL} used methods from \cite{SarnakWigman}, \cite{NazarovSodin} to prove a limit law for homotopy types of random geometric complexes. The first part of the current paper adapts this approach to establish a limit law for the homotopy type of a random cubical complex.

Our setting is simpler than the random nodal sets of \cite{SarnakWigman} or the random geometric complexes of \cite{ALL}, but in addition to establishing a homotopy limit law, we are able to go further in addressing the second part of the above problem asking to ``investigate how the outcome depends on $p$''.
Namely, we study the tail decay of coefficients in the limiting homotopy measure, and our results show a qualitative change in the decay rate when $p$ crosses the so-called \emph{percolation threshold $p_c$};
the tail decay rate is exponential when $0<p<p_c$, and the tail decay rate is subexponential (slower than exponential) when $p_c<p<1$, see theorems \ref{thm:exptailmu} and \ref{thm:subexptailmu} below.


\subsection{A random cubical complex associated to site percolation}

Fix a dimension $d \geq 2$, and consider the lattice $\mathds{Z}^d\subset\mathds{R}^d$ of points with integer coordinates.  Color the lattice points independently at random; designate each point $x\in\mathds{Z}^d$ colored black with probability $p\in(0,1)$ and colored white with probability $1-p$. Call the random set of all black points $Z$.  We will want to be able to refer to the $d$-dimensional cube with lower corner $x\in\mathds{Z}^d$ and size $\ell$:
$$B_\ell(x)=\prod_{i=1}^d[x_i,x_i+\ell].$$ 

\noindent
From $Z$ we build a random
cubical complex
by taking the interior of the union of closed unit cubes:
$$\Phi=\bigg(\bigcup\limits_{z\in Z}B_1(z)\bigg)^{\mathrm{o}}.$$
We note that
the connected components of $\Phi$,
also referred to as \emph{clusters},
coincide with so-called \emph{polyominoes},
which are also referred to as \emph{lattice animals}.

Recall \cite[Ch. 2]{Munkres} that two topological spaces $X$ and $Y$ are said to be homotopy equivalent
if there are maps
$f:X \rightarrow Y$,
$g:Y \rightarrow X$
such that $g \circ f$ is homotopic to the identity map $\text{id}_X$, and $f \circ g$ is homotopic to the identity map $\text{id}_Y$.
While we will often work directly with the topological space $\Phi$ viewed as an open subset of $\RR^d$, it is sometimes convenient to use the homotopically equivalent cubical complex described in the remark below. We refer the reader to \cite[Ch. 2]{Mischaikow} for an overview of algebraic topology for cubical complexes.

\begin{remark}\label{rmk:cubicalcomplex}
The set $\Phi$ is homotopy equivalent
to the following cubical complex.
First shift the set $Z$ by $(1/2,1/2,...,1/2)$ to obtain the vertex set $F_0 = Z+(1/2,1/2,...,1/2)$ consisting of centers of all the unit cubes in $\Phi$.
Build an edge set $F_1$ by including every edge between a pair of adjacent (i.e., differing by only one step in only one coordinate) vertices in $F_0$, and for $k=2,..,d$ inductively build a set of $k$-dimensional faces by including a $k$-dimensional cube if all of its $(k-1)$-dimensional faces appear in $F_{k-1}$. To see that $\Phi$ is homotopy equivalent to the cubical complex $F:= \cup_{k=0}^d F_k$ we use the nerve lemma as follows.
For each face $f$ in $F_k$, let $U_f$ denote the the interior of the union of closed unit cubes centered at the vertices of $f$.
The collection $\cU = \{U_f\}_{f\in F}$ forms an open cover of $\Phi$, and, since the $U_f$ are convex, the nerve lemma \cite[Cor. 4G.3]{Hatcher} yields that the nerve $N(\cU)$ of $\cU$ is homotopy equivalent to $\Phi$. As for $F$, take $U^*_f = U_f \cap F$, and let $\cU^*$ denote the open cover $\{U^*_f\}_{f\in F}$ of $F$. 
Each of the sets $U^*_f$, while not necessarily convex, is contractible, which can be seen from a straight line homotopy from $U^*_f$ to the face $f$, which is contractible. Moreover, the intersection of any sets in $\cU^*$ is either empty or is again a set in $\cU^*$.
So the nerve lemma implies that $F$ is homotopy equivalent to the nerve $N(\cU^*)$. But the nerves $N(\cU)$ and $N(\cU^*)$ are isomorphic: any intersection of sets $U_f$ is nonempty if and only if the intersection of the corresponding $U^*_f$ is nonempty. It follows that $\Phi$ is homotopy equivalent to the cubical complex $F$ as claimed.
\end{remark}

\begin{figure}[ht]
    \centering
    \includegraphics[scale=0.6]{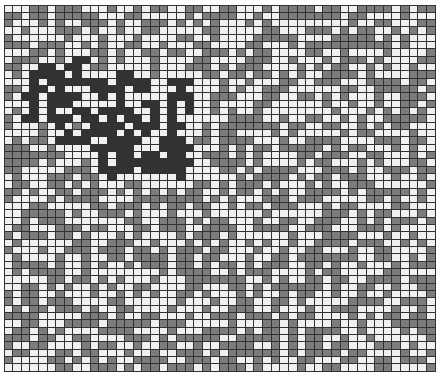}
    \caption{A random coloring in $d=2$ dimensions with coloring probability $p=0.5$.  One of the connected components of $\Phi$ is displayed in a darker shade.
    }
    \label{fig:perc50}
\end{figure}

\subsection{Distribution of homotopy types}

\noindent
Following \cite{SarnakWigman} and \cite{ALL} we will be interested in the connected components of the (random) set $\Phi$ in a window of expanding size. 
For an integer side length $L>0$, define the window of size $L$ centered at $x\in\mathds{Z}^d$ as:
$$W_L(x)=\prod_{i=1}^d\bigg[x_i- \floor[\bigg]{\frac{L}{2}}, x_i+ \ceil[\bigg]{\frac{L}{2}} \bigg],$$
which has volume $L^d$. 

\noindent
Let $\mathcal{G}$ denote the set of equivalence classes of finite connected $d$-dimensional cubical complexes up to homotopy equivalence. This set is countable (it is a countable union of finite sets). 

Let $\Phi_L$ denote the union of connected components of $\Phi$ that are completely contained within the window $W_L(0)$,
and let us next define the random homotopy measure
$\mu_L$ on $\G$ associated to $\Phi_L$:
\begin{equation}\label{eq:mudef}
\mu_L := \frac{1}{b_0(\Phi_L)} \sum \delta_{[s]} = \sum a_{L,\gamma} \delta_\gamma,
\end{equation}
where the first sum is over all connected components $s$ of $\Phi_L$, $b_0(\Phi_L)$ denotes the zeroth Betti number (number of connected components) of $\Phi_L$, and $[s]$ denotes the homotopy type of $s$ (i.e., the equivalence class of connected cubical complexes that are homotopy equivalent to $s$).
In the second sum appearing in (\ref{eq:mudef}),
we have collected terms
so the sum is over
homotopy types $\gamma \in \G$
and the coefficient $a_{L,\gamma}$ is the number of connected components in $\Phi_L$ with homotopy type $\gamma$ normalized by $b_0(\Phi_L)$.
Note that $\mu_L$ has total mass one (i.e., it is a probability measure).

\noindent

Before we state our limit law for the sequence of homotopy measures $\mu_L$,
we recall the notion of convergence in probability.
We say that a sequence $\{ \mu_n \}_{n \in \NN}$ of random measures \emph{converges in probability} to a measure $\mu$ if for every $\e>0$ we have $$\lim_{n \rightarrow \infty} \PP\{d(\mu_n, \mu)>\e\}=0,$$
where $$ d(\mu_1, \mu_2):=\sup_{A} \left|\mu_1(A)-\mu_2(A)\right|$$
denotes the \emph{total variation distance}.

\begin{thm}\label{thm:main}
The random measure $\mu_L$ converges in probability to a deterministic probability measure $\mu$ whose support is $\G$.
\end{thm}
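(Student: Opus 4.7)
The plan is to adapt the integral-geometric approach of \cite{SarnakWigman}, \cite{ALL} to our discrete lattice setting, exploiting the fact that the coloring is $\ZZ^d$-stationary and ergodic. I would break the theorem into three ingredients: (a) coordinate-wise convergence $a_{L,\gamma} \to \mu(\{\gamma\})$ in probability for each fixed $\gamma \in \G$, (b) uniform tightness of $\{\mu_L\}$ over $\G$, and (c) upgrading (a) and (b) to total-variation convergence.

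For (a), write $a_{L,\gamma} = N_\gamma(L)/b_0(\Phi_L)$, where $N_\gamma(L)$ counts connected components of $\Phi_L$ of homotopy type $\gamma$. Using a ``lex-min'' device, I would express $N_\gamma(L)$ as a sum over lattice points $z$ of the indicator that $z$ is the lexicographic minimum of a finite component of $\Phi$ of type $\gamma$ fully contained in $W_L(0)$. Stationarity then yields $\EE[N_\gamma(L)] = c_\gamma L^d + O(L^{d-1})$ with $c_\gamma > 0$, where the boundary correction absorbs the discrepancy between ``lex-min in $W_L$'' and ``component contained in $W_L$''; positivity of $c_\gamma$ holds because every $\gamma \in \G$ is by definition realized by some finite cubical pattern, which appears with positive probability in a sufficiently large region. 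For concentration, I would partition $W_L(0)$ into disjoint subcubes of side $R$: components contained in a single subcube with a white-cell buffer are independent across subcubes, so a Chebyshev bound gives concentration of the local contribution. The error from components straddling subcube boundaries is handled by tail bounds on the finite-cluster size distribution---exponential decay in the subcritical regime $p<p_c$ \cite{Grimmett}, and the subexponential but summable Aizenman-Delyon-Souillard decay of finite clusters in the supercritical regime $p>p_c$. The same tiling argument applied to $b_0 = \sum_\gamma N_\gamma$ gives $b_0(\Phi_L)/L^d \to c_{\mathrm{tot}} > 0$ in probability, so taking ratios yields the pointwise claim with $\mu(\{\gamma\}) := c_\gamma / c_{\mathrm{tot}}$.

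For (b), the same cluster-size tail estimates furnish, for every $\e > 0$, a finite set $F_\e \subset \G$ (say, homotopy types realizable on at most some $R_\e$ cells) such that $\EE[\mu_L(\G \setminus F_\e)] < \e$ uniformly in $L$. This simultaneously forces $\sum_\gamma \mu(\{\gamma\}) = 1$ and provides tightness of $\{\mu_L\}$; full support of $\mu$ follows from $c_\gamma > 0$ for every $\gamma$. Step (c) is then standard: given $\e>0$, pick $F_\e$ so that $\mu(\G \setminus F_\e) < \e/3$ and $\EE[\mu_L(\G \setminus F_\e)] < \e/3$ for large $L$; since for discrete probability measures
$$d(\mu_L,\mu) \leq \sum_{\gamma \in F_\e}\bigl|a_{L,\gamma}-\mu(\{\gamma\})\bigr| + \mu_L(\G \setminus F_\e) + \mu(\G \setminus F_\e),$$
the finite sum vanishes in probability by (a), and the tail term is controlled by Markov's inequality and (b). The main obstacle I anticipate is the cluster-tail estimate at $p>p_c$: summable decay of the finite-cluster size distribution in the supercritical regime is needed both for the variance bound in (a) and for the tightness in (b), and this is the substantive percolation input---subcritical decay is far more elementary, whereas the supercritical case requires more delicate isoperimetric or Peierls-type arguments. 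Everything else reduces to a routine ergodic tiling computation.
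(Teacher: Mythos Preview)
Your overall architecture is correct and close to the paper's, but two points of comparison are worth making.

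First, for the concentration step in (a) the paper does not use a tiling-plus-Chebyshev argument. Instead it applies the pointwise ergodic theorem for $\ZZ^d$-actions directly to the bounded function $f(Z)=\cN(\Phi,B_\ell(x_0);\gamma)$, and combines this with a discrete integral-geometry sandwich comparing $\cN$ and $\cN^*$ (the discrepancy is bounded deterministically by the shell volume $O(\ell^{d-1})$). This yields almost-sure and $L^1$ convergence of $c_{L,\gamma}$ in one stroke, with no probabilistic input beyond ergodicity of the shift. Your route via independent subcubes and second-moment bounds would also work, but is more laborious.

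Second---and this is the more substantive point---the ``main obstacle'' you anticipate is illusory. Neither the boundary error in (a) nor the tightness in (b) requires any cluster-size tail input from percolation theory. Both follow from the trivial deterministic bound: the number of components in $W_L(0)$ each occupying more than $S$ cells is at most $L^d/S$, since they are disjoint. This is precisely the content of the paper's ``topology does not leak to infinity'' lemma, and it handles the straddling components in your tiling as well. By invoking Aizenman--Delyon--Souillard you import heavy machinery where a one-line counting argument suffices, and you also create an artificial gap at $p=p_c$, where neither the subcritical nor the supercritical tail estimate is available---whereas the theorem is stated for all $p\in(0,1)$ and the paper's proof is completely uniform in $p$.
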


We next consider how the limiting distribution of homotopy types, captured by the measure $\mu$, depends on the coloring probability $p$.

Let $C_0$ denote the (possibly empty) connected
component of $\Phi$ containing the interior of the cube $B_1(0)$.
Recall from percolation theory \cite{Grimmett} that the critical probability $0<p_c<1$ is
defined as 
\begin{equation}\label{eq:defpc}
p_c := \inf_p \{p: \theta(p) > 0 \},
\end{equation} where $\theta(p)$ denotes the probability that the size $|C_0|$ of the component $C_0$ is infinite.
It is known that the probability that
$\Phi$ has an infinite connected component is zero when $p<p_c$, and when $p > p_c$
the probability that $\Phi$ has an infinite connected component is one.  In the latter case, the infinite component is unique almost surely.

In the case of $d=2$ dimensions, it is simple to index the elements of $\G$ by the natural numbers (each type can be represented as a wedge of some number of circles),
but in $d \geq 3$ dimensions
explicit enumeration is much more complicated.
Thus, in order to formulate our results concerning the tail decay of $\mu$, we introduce \emph{tail sets} of homotopy types based on ``binning'' the types with respect to a specified Betti number.
To be precise, for each $k=1,2,...,d-1$
define the tail set $T_n = T_n(k,d)$ as
\begin{equation}\label{eq:tailset}
T_n(k,d) := \{ \gamma \in \G : \beta_k(\gamma) \geq n \},
\end{equation}
where $\beta_k$ denotes the $k$th Betti number of $\gamma$. We are interested in the decay rate of the measure $\mu(T_n)$ of the tail set as $n \rightarrow \infty$.

\begin{thm}[Exponential tail for $p<p_c$]\label{thm:exptailmu}
Fix $0 < k < d$, and suppose that $0<p<p_c$.
There exists $\alpha = \alpha(p,k,d)>0$ such that
\begin{equation}\label{eq:expdecay}
 \mu(T_n) \leq \exp\left\{ -\alpha \cdot n \right\}, \quad \text{ for all } n \geq 1,
\end{equation}
where $T_n = T_n(k,d) \subset \G$ denotes the tail set defined above in (\ref{eq:tailset}).
\end{thm}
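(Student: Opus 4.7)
The plan is to convert tail decay in homotopy type into tail decay in cluster size and then invoke the classical subcritical exponential decay of cluster sizes for Bernoulli site percolation.

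The first ingredient is a deterministic Betti-vs-size inequality. For a connected component $s$ of $\Phi$ consisting of $N$ unit cubes, consider the associated cubical complex $F$ of Remark~\ref{rmk:cubicalcomplex}. Each vertex of $F_0$ is contained in at most $\binom{d}{k} 2^{k}$ many $k$-cells (choose the $k$ coordinates to extend and the direction of each extension), and each $k$-cell has $2^{k}$ vertices, so double counting gives $V_k \le \binom{d}{k} N$ for the number $V_k$ of $k$-cells of $F$. Since $\beta_k \le V_k$, we obtain $\beta_k(s) \le C(d,k)\,|s|$ with $C(d,k) := \binom{d}{k}$. Consequently, $[s]\in T_n$ forces $|s| \ge n/C(d,k)$.

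Next I would express $\mu(T_n)$ through statistics of the cluster $C_0$ of the origin cube. Theorem~\ref{thm:main} yields $\mu_L(T_n) \to \mu(T_n)$ in probability and, by bounded convergence, also in expectation. Writing $N_L(T_n)$ for the number of components of $\Phi_L$ with $[s]\in T_n$, use the mass-transport identity
\begin{equation*}
N_L(T_n) = \sum_{x \,:\, B_1(x)\subset W_L(0)} \frac{\mathbf{1}_{x\in Z,\;[C_x]\in T_n,\; C_x\subset W_L(0)}}{|C_x|},
\end{equation*}
and likewise for $b_0(\Phi_L)$. Dividing each by $L^d$ and applying the multidimensional ergodic theorem (controlling the boundary terms produced by clusters meeting $\partial W_L(0)$ via the exponential tail of $|C_0|$ below) identifies
\begin{equation*}
\mu(T_n) \;=\; \frac{\EE\bigl[\mathbf{1}_{[C_0]\in T_n}\,/\,|C_0| \,\bigm|\, 0\in Z\bigr]}{\EE\bigl[1/|C_0| \,\bigm|\, 0\in Z\bigr]} \;\le\; \kappa \cdot \PP\bigl(|C_0| \ge n/C(d,k) \,\bigm|\, 0\in Z\bigr),
\end{equation*}
where $\kappa := 1/\EE[1/|C_0| \mid 0\in Z]$ is a finite, positive constant (since $|C_0|\ge 1$ on $\{0\in Z\}$); in the inequality I used Step~1 together with $1/|C_0| \le 1$.

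Finally, the classical Aizenman--Barsky--Menshikov theorem (see \cite[Ch.~5]{Grimmett}) asserts that for $p<p_c$ there exists $\lambda=\lambda(p)>0$ with $\PP(|C_0|\ge m \mid 0\in Z) \le e^{-\lambda m}$ for all $m\ge 1$. Substituting $m = n/C(d,k)$ yields $\mu(T_n) \le \kappa\,e^{-\lambda n/C(d,k)}$; since $\mu(T_n)\le 1$ trivially, a mild decrease of the exponent absorbs the constant $\kappa$ and gives $\mu(T_n) \le e^{-\alpha n}$ for every $n\ge 1$ with some $\alpha = \alpha(p,k,d) > 0$. The main obstacle in this program is the middle step: relating the abstract limit measure $\mu$ to a clean local functional of $C_0$ requires careful accounting for clusters intersecting $\partial W_L(0)$ (and thus not counted in $\Phi_L$), and depends on how Theorem~\ref{thm:main} is actually established; the Betti-vs-size bound of Step~1 and the classical subcritical cluster tail of Step~3 are essentially off-the-shelf.
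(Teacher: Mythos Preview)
Your proposal is correct and follows the same three-step skeleton as the paper: (i) a deterministic bound $\beta_k(s)\le C(d,k)\,|s|$ via the cubical complex of Remark~\ref{rmk:cubicalcomplex}, (ii) a passage from $\mu(T_n)$ to a tail probability for the origin cluster $C_0$, and (iii) the subcritical exponential decay of $|C_0|$. The only real difference is in step~(ii). You aim for an \emph{exact} identity via mass transport and the ergodic theorem,
\[
\mu(T_n)\;=\;\frac{\EE\!\left[\mathbf{1}_{[C_0]\in T_n}\,/\,|C_0|\ \middle|\ 0\in Z\right]}{\EE\!\left[1/|C_0|\ \middle|\ 0\in Z\right]},
\]
which is sharper but, as you rightly flag, forces you to control clusters touching $\partial W_L(0)$. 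The paper sidesteps this entirely: since only an upper bound is needed, it writes $\mu(T_n)=c^{-1}\sum_{\gamma\in T_n}c_\gamma$, applies Fatou and monotone convergence to obtain $\mu(T_n)\le c^{-1}\liminf_L L^{-d}\,\EE\,\cN(\Phi,W_L(0);T_n)$, and then uses the crude overcount $\cN(\Phi,W_L(0);T_n)\le \sum_{x:\,B_1(x)\subset W_L(0)}\mathbf{1}\{\beta_k(C_x)\ge n\}$ (every qualifying component contains at least one cube) together with translation invariance to get $\mu(T_n)\le c^{-1}\PP\{\beta_k(C_0)\ge n\}$ with no boundary bookkeeping at all. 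Your route yields a genuine formula that could be reused (e.g.\ for lower bounds or to study $\mu_0$); the paper's route is quicker for the inequality at hand.
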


\begin{thm}[Subexponential tail for $p>p_c$]\label{thm:subexptailmu}
Fix $0 < k < d$, and suppose that $p_c<p<1$.
There exists $\eta = \eta(p,k,d)>0$ such that
\begin{equation}\label{eq:subexpdecay}
\mu(T_n) \geq \exp \left\{ -\eta \cdot n^{(d-1)/d} \right\},
\end{equation}
for all $n$ sufficiently large,
where $T_n=T_n(k,d) \subset \G$ denotes
the tail set defined in (\ref{eq:tailset}).
\end{thm}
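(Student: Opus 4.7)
The plan is to exhibit, with probability at least $\exp(-\eta\, n^{(d-1)/d})$, a bounded cluster of $\Phi$ containing the origin whose $k$-th Betti number is at least $n$, and to transfer this to the desired lower bound on $\mu(T_n)$. Combining Theorem \ref{thm:main} with the translation invariance and ergodicity of the coloring gives
\[
\mu(T_n)=\frac{\EE\bigl[|C_0|^{-1}\mathbf{1}\{|C_0|<\infty,\,\beta_k(C_0)\geq n\}\bigr]}{\EE\bigl[|C_0|^{-1}\mathbf{1}\{|C_0|<\infty\}\bigr]},
\]
where $C_0$ is the cluster of $\Phi$ containing (the interior of) $B_1(0)$ and $|C_0|$ counts its unit cubes. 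Since the denominator is a positive constant depending only on $p$ and $d$, it suffices to lower bound the numerator by something of the form $\exp(-\eta'\, n^{(d-1)/d})$.

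Choose a large constant $C=C(p,k,d)$ to be fixed later, set $\ell=\lceil (Cn)^{1/d}\rceil$, let $B=W_\ell(0)$, and let $F$ denote the shell of sites at lattice distance one from $B$, so that $|F|=O(\ell^{d-1})=O(n^{(d-1)/d})$. On the event $E_\ell$ that every site of $F$ is white, any cluster of $\Phi$ lying inside $B$ is automatically bounded, and
\[
\PP(E_\ell)=(1-p)^{|F|}\geq \exp\bigl(-c_1\, n^{(d-1)/d}\bigr).
\]
Conditional on $E_\ell$, the interior of $B$ is ordinary supercritical Bernoulli percolation in a box; for $\ell$ large it contains a unique giant component $G\subset B$ of size at least $\tfrac12\theta(p)\ell^d$, and the unit cube at the origin lies in $G$ with probability at least a constant $\rho(p,d)>0$ uniformly in $\ell$.

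The core step is to show that, conditional on $E_\ell$ and $\{0\in G\}$, one has $\beta_k(G)\geq n$ with probability at least $1/2$ for $C$ sufficiently large. One route is to apply the Hiraoka--Tsunoda law of large numbers cited above, giving $\beta_k(\Phi\cap B)/\ell^d\to b_k(p)$ in probability (with $b_k(p)>0$ for $1\leq k\leq d-1$), and then to argue that the aggregate $\beta_k$-contribution of the non-giant finite clusters inside $B$ is a strictly smaller constant multiple of $\ell^d$, which leaves $\beta_k(G)=\Omega(\ell^d)=\Omega(Cn)$. A more hands-on alternative is to place $\Omega(\ell^d)$ pairwise disjoint local configurations inside $B$, each independently realized with positive probability $q(p,k,d)$ and each contributing additively to $\beta_k$ of whichever cluster absorbs it: for $k=d-1$ a $3^d$-block with a single white center and black outer shell produces an isolated cavity, while for $1\leq k<d-1$ one uses an analogous ``hollow $(k+1)$-cube'' in a $(k+1)$-dimensional slice, padded so that the resulting hole is not closed off by the surrounding percolation. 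A second-moment / Paley--Zygmund estimate, using independence across disjoint candidate sites for the local realization event and FKG for the connectivity events, then forces at least $n$ such features to be absorbed by $G$ with probability $\geq 1/2$ once $C$ is large enough.

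Combining these ingredients, the event ``$E_\ell$, $0\in G$, $\beta_k(G)\geq n$'' occurs with probability at least $\tfrac14\rho\exp(-c_1 n^{(d-1)/d})$, and on it $C_0=G$ is bounded with $|C_0|\leq|B|=O(n)$ and $\beta_k(C_0)\geq n$; thus the numerator above is at least $n^{-1}\exp(-c_2 n^{(d-1)/d})$, and absorbing the polynomial prefactor into the exponent yields $\mu(T_n)\geq \exp(-\eta n^{(d-1)/d})$ for suitable $\eta$ and all $n$ large. The hardest step in this plan is the Betti-number lower bound in the previous paragraph: controlling how much of the total $\beta_k$ inside $B$ is captured by the giant cluster (rather than dissipated across many small finite clusters) or, equivalently, handling the FKG correlations between the events ``my local feature is absorbed by $G$'' across different feature sites, is where the technical difficulty concentrates.
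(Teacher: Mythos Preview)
Your high-level plan---surround a box of side $\ell$ of order $n^{1/d}$ by a white shell, pay $\exp(-c\,\ell^{d-1})$ for it, and arrange that the resulting finite cluster has $\beta_k\ge n$---matches the paper's strategy. The genuine gap is exactly where you flag it: forcing $\beta_k$ of the surviving component to be large. Both of your proposed routes are problematic as stated. For route (a), you would need to show both that the Hiraoka--Tsunoda constant $b_k(p)$ is strictly positive in the supercritical regime and, more seriously, that the non-giant finite clusters inside $B$ account for a \emph{strictly smaller} constant times $\ell^d$ of the total $\beta_k$; neither follows from the law of large numbers itself, and the second claim has no obvious soft proof. For route (b), the events ``my local feature is absorbed by $G$'' are positively correlated, and FKG \emph{increases} the covariance, which works against a second-moment/Paley--Zygmund bound; the variance control you need does not follow from what you have written.

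The paper sidesteps both difficulties with two devices. First, rather than relying on a giant component, it forces the inner shell $S_{L-2}$ of the box to be \emph{completely black}; then any local feature connected to this shell by a path is automatically in one and the same component, with no need to identify or control a giant cluster. Second, the local features (``eggs'') are engineered with a fully black outer mini-shell of side $7$ and a single-cube bridge to an embryonic $k$-cycle, so that (i) a Mayer--Vietoris argument guarantees each egg contributes $+1$ to $\beta_k$ of whatever component contains it, regardless of the ambient configuration, and (ii) connectivity of the egg to the inner wall is implied by connectivity of the egg to infinity in the \emph{unconditioned} process (any infinite path must cross the shell). The count $R_L$ of eggs joined to infinity then has mean $\phi(p)(L-11)^d$ by translation invariance alone, and a one-line first-moment inequality (no second moment, no FKG) gives $\PP\{R_L\ge \tfrac{\phi(p)}{2}L^d\}\ge \tfrac{\phi(p)}{3}$. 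Since this event depends only on sites strictly inside the double shell, it is independent of the shell-coloring event, and the product yields the subexponential lower bound. Your Palm-type reduction $\mu(T_n)\propto\EE[|C_0|^{-1}\mathbf{1}\{\cdots\}]$ is a legitimate alternative to the paper's direct manipulation of the $c_\gamma$'s, but it does not help with the core combinatorial step.
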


\subsection*{Outline of the paper}

We present the proof of Theorem \ref{thm:main} in Section \ref{sec:coeff}.
In Section \ref{sec:perc},
we investigate dependence of $\mu = \mu(p)$ on 
the coloring probability $p$.
We prove Theorem \ref{thm:exptailmu}
in Section \ref{sec:exptail}
and Theorem \ref{thm:subexptailmu} in Section \ref{sec:subexptail}, where we employ a novel adaptation of techniques from percolation theory combined with several topological considerations.
We prove continuity of the individual coefficients in the homotopy limit law in Section \ref{sec:continuity},
and in Section \ref{sec:empirical} we present empirical results leading to interesting questions and conjectures regarding the dependence of $\mu$ on $p$.
We conclude with some remarks and open problems in Section \ref{sec:concl}.

\subsection*{Acknowledgements}
The theme of investigating homotopy types with attention toward the percolation threshold was partly inspired by a question posed to the second author by Matthew Kahle.
We would also like to thank anonymous referees for helpful comments and corrections.


\section{The Homotopy Limit Law: Proof of Theorem \ref{thm:main}}\label{sec:coeff}

\begin{Def}\label{def:N}
(homotopy type counting functions) Let $Y_1,Y_2$ be topological spaces, and $Z\in\mathcal{G}$ a homotopy class. \begin{itemize}
    \item Let $\mathcal{N}(Y_1,Y_2,;Z)$ denote the number of connected components of $Y_1$ that are entirely contained in the interior of $Y_2$ and are members of the homotopy class $Z$.
    \item Let $\mathcal{N}^*(Y_1,Y_2,;Z)$ denote the number of connected components of $Y_1$ that have nonempty intersection with $Y_2$ and are members of $Z$.
\end{itemize}
\label{main}
\end{Def}


We first prove convergence for
each homotopy count normalized by the volume $L^d$ of $W_L(0)$, 
$$ c_{L,\gamma}=\frac{\mathcal{N}(\Phi,W_{L}(0);\gamma)}{L^d}.$$

\subsection{Convergence of $c_{L,\gamma}$}

\begin{prop}\label{prop:const} For every $\gamma\in\mathcal{G}$ there exists a constant $c_{\gamma}$ such that the random variable 
$$c_{L,\gamma}=\frac{\mathcal{N}(\Phi,W_{L}(0);\gamma)}{L^d}$$
converges to $c_\gamma$ in $L^1$ and almost surely as $L\to\infty$.
\end{prop}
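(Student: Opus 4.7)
The plan is to deduce Proposition~\ref{prop:const} from the multidimensional (Wiener) ergodic theorem applied to the $\ZZ^d$-translation action on the Bernoulli product measure, together with a deterministic boundary correction that is of lower order than the normalization $L^d$.

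First I would define a local indicator. For each $x \in \ZZ^d$ let
\[
Y_x := \mathbf{1}\bigl\{\, x \in Z,\ \text{the component of } \Phi \text{ containing } B_1(x) \text{ is finite of homotopy type } \gamma,\ \text{and } x \text{ is its lexicographically smallest vertex}\,\bigr\}.
\]
Then $Y_x = Y_0 \circ T_x$, where $T_x$ is translation by $x$. Since the coloring is i.i.d., the $\ZZ^d$-shift action on $\{0,1\}^{\ZZ^d}$ is measure-preserving and ergodic (in fact mixing). Because $Y_0 \in L^\infty$, the multidimensional Birkhoff--Wiener ergodic theorem yields
\[
\frac{1}{L^d} \sum_{x \in W_L(0) \cap \ZZ^d} Y_x \;\longrightarrow\; c_\gamma := \EE[Y_0] \quad \text{almost surely and in } L^1 \text{ as } L\to\infty.
\]

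Next I would compare this lattice sum to $\mathcal{N}(\Phi, W_L(0); \gamma)$. The sum records every finite component of homotopy type $\gamma$ whose lex-smallest vertex lies inside $W_L(0)$. Every component entirely contained in $W_L(0)$ contributes exactly one, giving $\mathcal{N}(\Phi, W_L(0); \gamma)$; the remaining contributions come from components whose lex-smallest vertex is in $W_L(0)$ but which extend outside. By connectedness, each such ``boundary-crossing'' component must include some lattice vertex in the interior boundary strip
\[
S_L := \{\, x \in W_L(0) \cap \ZZ^d : d(x, \partial W_L(0)) \leq 1 \,\}.
\]
Since distinct components are vertex-disjoint, at most $|S_L| = O(L^{d-1})$ such components can occur, so
\[
\left|\; \sum_{x \in W_L(0) \cap \ZZ^d} Y_x \;-\; \mathcal{N}(\Phi, W_L(0); \gamma)\; \right| \;\leq\; C\, L^{d-1}.
\]
Dividing by $L^d$, the discrepancy is $O(L^{-1})$ deterministically and so vanishes in both the almost-sure and $L^1$ senses; combined with the ergodic limit this yields $c_{L,\gamma} \to c_\gamma$ in both senses.

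The hard part is conceptual rather than technical: choosing the right local indicator so that the problem becomes an ergodic averaging problem with only a surface-order defect. Once that is done, the only subtlety is making $Y_x$ well-defined uniformly in $p$. In the supercritical regime $p > p_c$ a unique infinite cluster exists almost surely, but $Y_x$ is explicitly restricted to finite components and is therefore insensitive to it; the infinite cluster is also invisible to $\mathcal{N}$, which counts only components fully inside $W_L(0)$. Ergodicity of the Bernoulli product measure under $\ZZ^d$-translations is standard, and no fine estimate on cluster sizes is required since the boundary correction is purely combinatorial.
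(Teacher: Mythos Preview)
Your argument is correct and takes a genuinely different route from the paper. The paper follows the Sarnak--Wigman ``integral geometry sandwich'': it fixes an intermediate scale $\ell$, tiles $W_L(0)$ by translates of $B_\ell$, bounds $c_{L,\gamma}$ from above and below by ergodic averages of $\mathcal{N}$ and $\mathcal{N}^*$ over these small boxes, applies the $\ZZ^d$ ergodic theorem at the fixed scale $\ell$ to get a common limit $\lambda(\ell)$, and then uses a Cauchy argument in $L$ (followed by $\ell\to\infty$) to extract the constant $c_\gamma$. The key bookkeeping step is controlling $\mathcal{N}^* - \mathcal{N}$ by the shell volume $2d\ell^{d-1}$.

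You bypass the two-scale machinery entirely by rooting each finite component at its lexicographically smallest vertex, producing a genuinely stationary $L^\infty$ indicator field $(Y_x)$ to which the multidimensional ergodic theorem applies directly. The only defect between $\sum_{x\in W_L(0)}Y_x$ and $\mathcal{N}(\Phi,W_L(0);\gamma)$ comes from components that stick out of the window, and these are pigeonholed into the $O(L^{d-1})$ boundary cubes. This is cleaner and gives the limiting constant explicitly as $c_\gamma=\EE[Y_0]$, whereas in the paper $c_\gamma$ emerges only as an abstract Cauchy limit. What the paper's approach buys is portability: the sandwich template transfers verbatim from the continuous settings of Nazarov--Sodin and \cite{ALL}, where there is no natural discrete ``lex-smallest'' root to assign to a component, and it does not require the counted objects to have any canonical anchor point.
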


\begin{proof} 
We will use a similar technique to \cite{ALL}, \cite{SarnakWigman}, where an ``integral geometry sandwich'' is used.
Since we are in the discrete setting, these integrals become summations. \\

\noindent
Let $x_0=\prod_{i=1}^d\big\{-\floor[\big]{\frac{L}{2}}\big\}$ denote the lower corner of the window $W_L(0)$. Fixing $\gamma \in \mathcal{G},$ for $\ell,L\in \mathds{Z}, 0<\ell<L,$ we have the following two inequalities
(which together give a sandwich estimate for $c_{L,\gamma}$):

\begin{equation}\label{eq:sand1}
\Bigg( \frac{\floor[\big]{\frac{L}{\ell}}}{L} \Bigg)^d \frac{1}{\floor[\big]{\frac{L}{\ell}}^d}
\sum_{i_1=0}^{\floor{\frac{L}{\ell}}-1} ...
\sum_{i_d=0}^{\floor{\frac{L}{\ell}}-1}
\mathcal{N}(T^{i_1}_{e_1}...T^{i_d}_{e_d}\Phi,B_\ell(x_0);\gamma)
\leq\frac{\mathcal{N}(\Phi,W_L(0);\gamma)}{L^d},
\end{equation}
\begin{equation}\label{eq:sand2}
\Bigg( \frac{\ceil[\big]{\frac{L}{\ell}}}{L} \Bigg)^d \frac{1}{\ceil[\big]{\frac{L}{\ell}}^d}
\sum_{i_1=0}^{\ceil{\frac{L}{\ell}}-1} ...
\sum_{i_d=0}^{\ceil{\frac{L}{\ell}}-1}
\mathcal{N}^*(T^{i_1}_{e_1}...T^{i_d}_{e_d}\Phi,B_\ell(x_0);\gamma)
\geq\frac{\mathcal{N}(\Phi,W_L(0);\gamma)}{L^d},
\end{equation}
\noindent
where the maps $T^{i_1}_{e_1}...T^{i_d}_{e_d}$ are shift maps in the directions of the standard basis vectors:
$$T^{i}_{e_k}\Phi=\Big\{ x\in\mathds{R}^d : x+i\ell e_k\in\Phi \Big\}$$
\noindent
We remark that applying these shift maps to the random set $\Phi$ is equivalent to applying them in the opposite direction on $B_\ell(x_0)$, and consequently, $x_0$. We will let $S_\ell(x)\subset B_\ell(x)$ denote the cubical ``shell'' about the box of size $\ell$:
$$S_\ell(x)=\bigcup_{z} B_1(z),$$
where the union is over all $z$ having $z_i \in \left\{x_i, x_i+ \ell-1 \right\}$ for some $i=1,2,...,d$.


Notice that the volume of the shell satisfies the inequality \begin{equation}\label{eq:vol}
    \Vol(S_\ell(x))\leq2d \ell^{d-1}.
\end{equation} 
Observe that any connected component of $\Phi$ counted by $\mathcal{N^*}(\Phi,B_\ell(z);\gamma)$ but not $\mathcal{N}(\Phi,B_\ell(z);\gamma)$ intersects the box's shell, $S_\ell(z)$. 
Since the number of connected components of $\Phi$ intersecting the shell
cannot exceed the number of unit cubes in the shell, this gives us the estimate
\begin{align*}
\cN^*(\Phi,B_\ell(z);\gamma) &\leq \cN(\Phi,B_\ell(z);\gamma) + \Vol(S_\ell(x)) \\
&\leq \cN(\Phi,B_\ell(z);\gamma) + 2d \ell^{d-1},
\end{align*}
where we have also used (\ref{eq:vol}).

Therefore, we find that the expression
$$ \Bigg( \frac{\ceil[\big]{\frac{L}{\ell}}}{L} \Bigg)^d \frac{1}{\ceil[\big]{\frac{L}{\ell}}^d}
\sum_{i_1=0}^{\ceil{\frac{L}{\ell}}-1} ...
\sum_{i_d=0}^{\ceil{\frac{L}{\ell}}-1}
\mathcal{N}^*(T^{i_1}_{e_1}...T^{i_d}_{e_d}\Phi,B_\ell(x_0);\gamma)
$$
appearing in the left side of (\ref{eq:sand2}) is bounded above by
$$
\Bigg( \frac{\ceil[\big]{\frac{L}{\ell}}}{L} \Bigg)^d \frac{1}{\ceil[\big]{\frac{L}{\ell}}^d}
\sum_{i_1=0}^{\ceil{\frac{L}{\ell}}-1} ...
\sum_{i_d=0}^{\ceil{\frac{L}{\ell}}-1}
\big(\mathcal{N}(T^{i_1}_{e_1}...T^{i_d}_{e_d}\Phi,B_\ell(x_0);\gamma) + 2d \ell^{d-1}\big).
$$
We can thus utilize
\begin{equation}\label{eq:sand2b}
a(\ell) + \Bigg( \frac{\ceil[\big]{\frac{L}{\ell}}}{L} \Bigg)^d \frac{1}{\ceil[\big]{\frac{L}{\ell}}^d}
\sum_{i_1=0}^{\ceil{\frac{L}{\ell}}-1} ...
\sum_{i_d=0}^{\ceil{\frac{L}{\ell}}-1}
\mathcal{N}(T^{i_1}_{e_1}...T^{i_d}_{e_d}\Phi,B_\ell(x_0);\gamma)
\geq\frac{\mathcal{N}(\Phi,W_L(0);\gamma)}{L^d},
\end{equation}
in place of (\ref{eq:sand2})
as part of our sandwich estimate, where
$$a(\ell) := 2d\ell^{d-1} \bigg( \frac{\ceil[\big]{\frac{L}{\ell}}}{L} \bigg)^d.$$

Now we are ready to apply the ergodic theorem from \cite{MeesterRoy}
(here we state the result \cite[Prop. 2.2]{MeesterRoy}
with the added assumption of ergodicity which strengthens the conclusion as explained in \cite[Sec. 2.1]{MeesterRoy}).

\begin{thm}[Ergodic Theorem \cite{MeesterRoy}] 
Let $(\Omega,\mathcal{F},\rho)$ be a probability space, and suppose $\mathds{Z}^d$ acts on $\Omega$ via translations $T_1,T_2,...,T_d$.
If $f:\Omega \rightarrow \RR$ satisfies $f \in L^1(\rho)$
and the action of $\ZZ^d$ is ergodic, then
$$\frac{1}{n^d}\sum_{i_1=0}^{n-1} ... \sum_{i_d=0}^{n-1} f(T_1^{i_1}...T_d^{i_d}(\omega)) \to \mathds{E}(f(\omega)) \;\; a.s.\;\; as\; n\to\infty.$$ 
\end{thm}

The probability space in our application 
will be the space of site percolations $Z$,
and the measure preserving transformations are translations by a standard basis vector acting on the site percolation (this action is ergodic by \cite[Prop. 7.3]{LyPe}). 
The function $f$ that we will consider is
given by
$$f(Z)=
\mathcal{N}(\Phi,B_\ell(x_0);\gamma),
$$
where $\ell$ is fixed, and the dependence of $f$ on $Z$ is through the construction of $\Phi$.
The required integrability of $f$ is satisfied since
\begin{equation}\label{eq:integrability}
    \EE f(Z) = \EE \big( 
\mathcal{N}(\Phi,B_\ell(x_0);\gamma) \big) \leq \ell^d < \infty.
\end{equation}



\noindent
So the ergodic theorem may be applied, and as  $L\to\infty$
(which implies $\floor[\big]{\frac{L}{\ell}},\ceil[\big]{\frac{L}{\ell}} \to \infty$) we have:

\begin{equation}\label{eq:seq1}
\Bigg( \frac{\floor[\big]{\frac{L}{\ell}}}{L} \Bigg)^d \frac{1}{\floor[\big]{\frac{L}{\ell}}^d}
\sum_{i_1=0}^{\floor{\frac{L}{\ell}}-1} ...
\sum_{i_d=0}^{\floor{\frac{L}{\ell}}-1}
\mathcal{N}(T^{i_1}_{e_1}...T^{i_d}_{e_d}\Phi,B_\ell(x_0);\gamma)
\to \mathds{E}\bigg(
\frac{\mathcal{N}(\Phi,B_\ell(x_0);\gamma)}{\ell^d}\bigg) = \lambda(\ell) \; \text{ a.s.}
\end{equation}

\begin{equation}\label{eq:seq2}
\Bigg( \frac{\ceil[\big]{\frac{L}{\ell}}}{L} \Bigg)^d \frac{1}{\ceil[\big]{\frac{L}{\ell}}^d}
\sum_{i_1=0}^{\ceil{\frac{L}{\ell}}-1} ...
\sum_{i_d=0}^{\ceil{\frac{L}{\ell}}-1}
\mathcal{N}(T^{i_1}_{e_1}...T^{i_d}_{e_d}\Phi,B_\ell(x_0);\gamma)
\to \mathds{E}\bigg(
\frac{\mathcal{N}(\Phi,B_\ell(x_0);\gamma)}{\ell^d}\bigg) = \lambda (\ell) \; \text{ a.s.}
\end{equation}


The fact that the above two sequences converge to the same constant $\lambda(\ell)$ is readily seen from considering the subsequence where $\frac{L}{\ell} \in \mathds{N}$. 

Let $\e>0$. Choose $\ell \gg 0$ sufficiently large such that 
$a(\ell)<\frac{\e}{4}.$ 
Almost surely, for $L\gg \ell$ large enough we have that each of the sequences in (\ref{eq:seq1}) and (\ref{eq:seq2}) is within $\frac{\e}{4}$ of 
the limit $\lambda(\ell)$.
Applying this to the sandwich estimate provided in (\ref{eq:sand1}) and (\ref{eq:sand2b}),
we obtain:
$$\lambda(\ell)-\frac{\e}{4} \leq \frac{\mathcal{N}(\Phi,W_L(0);\gamma)}{L^d} \leq \lambda(\ell) + a(\ell) + \frac{\e}{4}
\implies \bigg|\frac{\mathcal{N}(\Phi,W_L(0);\gamma)}{L^d}-\lambda(\ell) \bigg|\leq \frac{\e}{4} + a(\ell) < \frac{\e}{2}.$$
\noindent
The remainder of the proof simply argues that $\frac{\mathcal{N}(\Phi,W_L(0);\gamma)}{L^d}$ is a Cauchy sequence, and therefore convergent, which yields the desired result. For $L,M$ sufficiently large, we have
\begin{align*}
\left|\frac{\mathcal{N}(\Phi,W_L(0);\gamma)}{L^d}-\frac{\mathcal{N}(\Phi,W_M(0);\gamma)}{M^d} \right| &\leq \left|\frac{\mathcal{N}(\Phi,W_L(0);\gamma)}{L^d}-\lambda(\ell) \right|+
\left|\frac{\mathcal{N}(\Phi,W_M(0);\gamma)}{M^d}-\lambda(\ell) \right| \\
&< \frac{\e}{2} + \frac{\e}{2} = \e.
\end{align*}
This implies
 there exists a constant $c_\gamma$ such that $\frac{\mathcal{N}(\Phi,W_L(0);\gamma)}{L^d} \to c_\gamma$ almost surely and in $L^1$.
(The convergence in $L^1$ follows from the above integrability estimate (\ref{eq:integrability}) showing that this is a dominated convergence.)
\end{proof}

\begin{remark}\label{rmk:same}
The above argument as well as the rest of the proof of Theorem \ref{thm:main} can be adapted to establish the same limit law 
while alternatively defining the measure $\mu_L$ using the counting function $\cN^*$ in place of $\cN$.
Since the discrepancy between $\cN$ and $\cN^*$ is a lower order term (as seen in the proof above), this results in the same limiting measure $\mu$.
In particular, although working with $\cN^*$ would require enlarging $\G$ to include homotopy types of infinite components, those asymptotically account for a vanishingly small portion and do not appear in the support of the limiting measure.
\end{remark}

\subsection{Positivity of $c_\gamma$}

\begin{prop}\label{prop:pos} 
For each $\gamma \in \mathcal{G}$,
$c_\gamma>0$.
\end{prop}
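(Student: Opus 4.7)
The plan is to show that the expected number of $\gamma$-type connected components of $\Phi$ inside the box $B_\ell(x_0)$ grows like a positive multiple of $\ell^d$; combining this with the identification $c_\gamma = \lim_{\ell \to \infty} \lambda(\ell)$ coming from the sandwich argument in the proof of Proposition \ref{prop:const} will then force $c_\gamma > 0$.

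First, I will fix a finite connected set of lattice points $K_\gamma \subset \ZZ^d$ whose associated cubical complex, constructed as in Remark \ref{rmk:cubicalcomplex}, realizes the homotopy type $\gamma$. Let $\partial K_\gamma$ denote the set of lattice points outside $K_\gamma$ that are adjacent (i.e., differ by $\pm 1$ in exactly one coordinate) to some point of $K_\gamma$, and let $\ell_0$ be large enough that $K_\gamma \cup \partial K_\gamma$ fits inside a box of side length $\ell_0$.

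Next, for each translate $y \in \ZZ^d$ I will consider the local event $E_y$ that every site of $K_\gamma + y$ is black while every site of $\partial K_\gamma + y$ is white; this event has the strictly positive probability $q := p^{|K_\gamma|}(1-p)^{|\partial K_\gamma|}$. When $E_y$ occurs, $K_\gamma + y$ is a maximal connected set of black sites, so the corresponding union of unit cubes forms a connected component of $\Phi$ of homotopy type $\gamma$, and distinct translates yield distinct such clusters. Summing indicators gives
$$\cN(\Phi, B_\ell(x_0); \gamma) \;\geq\; \sum_y \mathbf{1}_{E_y},$$
where the sum extends over the $\gtrsim (\ell - \ell_0)^d$ translates for which $K_\gamma + y$ together with its isolating boundary both fit inside $B_\ell(x_0)$. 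Taking expectations and normalizing by $\ell^d$ will yield $\lambda(\ell) \geq q(\ell - \ell_0)^d / \ell^d \to q$, and hence $c_\gamma \geq q > 0$.

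The only real obstacle I anticipate is the initial representation step: one must ensure that every class in $\mathcal{G}$ is realizable as a subcomplex of the standard grid $\ZZ^d$ rather than merely as an abstract cubical complex. I expect this to follow from the paper's tacit convention that ``finite connected $d$-dimensional cubical complex'' means a subcomplex of $\ZZ^d$ (since only such classes can plausibly appear in the support of $\mu$), or else from a short embedding argument. Once a grid representative is in hand, everything else is a routine counting argument built on translation invariance of the site percolation.
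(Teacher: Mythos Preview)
Your proposal is correct and follows essentially the same approach as the paper: plant a fixed lattice representative of $\gamma$, surround it by white sites to isolate it as a full cluster, and feed the resulting positive-probability event into the lower half of the sandwich estimate to bound $c_\gamma$ from below. The only cosmetic differences are that the paper whitens an entire enclosing box rather than just the immediate boundary $\partial K_\gamma$, and invokes the sandwich inequality \eqref{eq:sand1} directly at a fixed $\ell$ rather than passing through $c_\gamma = \lim_{\ell\to\infty}\lambda(\ell)$; your version actually yields the sharper bound $c_\gamma \ge q$.
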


\begin{proof}
Fix $\gamma \in \G$,
and choose a representative $A$ 
of type $\gamma$ (i.e., $A$ is a bounded lattice component belonging to the class $\gamma$).
Let $B_\ell(x_0)$
be a box containing $A$ such that $A$ has empty intersection with the shell $S_\ell(x_0)$ of $B_\ell(x_0)$.
Let $E$ be the event that $\Phi |_{B_\ell (x_0)}=A$, and $\chi_E$ the indicator random variable for $E$. 
Using translation invariance, we have
$$\EE\mathcal{N}(T^{i_1}_{e_1}...T^{i_d}_{e_d}\Phi,B_\ell(x_0);\gamma)\geq \EE \chi_E = p^{|A|}(1-p)^{\ell^d - |A|}>0.$$
For each $c_{L,\gamma}$, we apply the bottom half (\ref{eq:sand1}) of the sandwich estimate. Then, using linearity of expectation, we have:
$$\EE c_{L,\gamma}=\EE\bigg(\frac{\mathcal{N}(\Phi,W_L(0);\gamma)}{L^d}\bigg)\geq\Bigg( \frac{\floor[\big]{\frac{L}{\ell}}}{L} \Bigg)^d \frac{1}{\floor[\big]{\frac{L}{\ell}}^d}
\sum_{i_1=0}^{\floor{\frac{L}{\ell}}-1} ...
\sum_{i_d=0}^{\floor{\frac{L}{\ell}}-1}
\EE\mathcal{N}(T^{i_1}_{e_1}...T^{i_d}_{e_d}\Phi,B_\ell(x_0);\gamma).
$$
Applying our first inequality yields:
$$\EE c_{L,\gamma} \geq\Bigg( \frac{\floor[\big]{\frac{L}{\ell}}}{L} \Bigg)^d \frac{1}{\floor[\big]{\frac{L}{\ell}}^d}
\sum_{i_1=0}^{\floor{\frac{L}{\ell}}-1} ...
\sum_{i_d=0}^{\floor{\frac{L}{\ell}}-1} p^{|A|}(1-p)^{\ell^d - |A|} = \Bigg( \frac{\floor[\big]{\frac{L}{\ell}}}{L} \Bigg)^d  (p^{|A|}(1-p)^{\ell^d - |A|}).
$$
Now taking the limit as $L$ tends to infinity gives:
$$c_\gamma = \lim_{L\to\infty}\big(\EE c_{L,\gamma}\big) \geq \lim_{L\to\infty} \Bigg( \Bigg( \frac{\floor[\big]{\frac{L}{\ell}}}{L} \Bigg)^d  (p^{|A|}(1-p)^{\ell^d - |A|}) \Bigg) = \frac{1}{\ell^d}(p^{|A|}(1-p)^{\ell^d - |A|}) >0,$$
as desired.
\end{proof}

\subsection{Limit law for coefficients}

Proposition \ref{prop:const}
establishes the convergence 
$c_{L,\gamma} \rightarrow c_\gamma$
in probability as $L \rightarrow \infty$.
Next we use this to verify the convergence
of the coefficients
$$a_{L,\gamma} = \frac{\cN(\Phi,W_L(0),\gamma)}{b_0(\Phi_L)}$$
appearing in the definition (\ref{eq:mudef}) of the measure $\mu_L$.

\begin{prop}\label{prop:coeff} 
For each $\gamma \in \G$,
the coefficient $a_{L,\gamma}$
converges in probability
to a positive deterministic constant $a_\gamma$ as $L\rightarrow \infty$.
\end{prop}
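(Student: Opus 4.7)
The plan is to write $a_{L,\gamma}$ as a quotient
$$ a_{L,\gamma} = \frac{\cN(\Phi, W_L(0); \gamma)}{b_0(\Phi_L)} = \frac{c_{L,\gamma}}{b_0(\Phi_L)/L^d}, $$
and then show that both the numerator and denominator converge (in probability) to positive deterministic constants. The numerator is handled by Proposition \ref{prop:const}, which gives $c_{L,\gamma} \to c_\gamma$ almost surely and in $L^1$. So the substantive work is to prove that $b_0(\Phi_L)/L^d$ converges in probability to some positive constant $C$.

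For this, I would apply the same ergodic-theoretic ``integral geometry sandwich'' argument used in the proof of Proposition \ref{prop:const}, but with the homotopy-type-restricted counting function $\cN(\cdot,\cdot;\gamma)$ replaced by the unrestricted counting function $\cN(\cdot,\cdot) := \sum_{\gamma' \in \G} \cN(\cdot,\cdot;\gamma')$, which counts all connected components entirely contained in the given window. The sandwich estimates in (\ref{eq:sand1}) and (\ref{eq:sand2b}) go through verbatim (the shell correction $2d\ell^{d-1}$ and the term $a(\ell)$ do not depend on $\gamma$), and the integrability condition (\ref{eq:integrability}) still holds since $\cN(\Phi, B_\ell(x_0)) \leq \ell^d$ (a component contains at least one unit cube). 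The ergodic theorem then yields convergence of $b_0(\Phi_L)/L^d = \cN(\Phi, W_L(0))/L^d$ almost surely and in $L^1$ to a constant $C \leq 1$.

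Next I would verify $C > 0$. Picking any fixed $\gamma_0 \in \G$, the obvious pointwise inequality $b_0(\Phi_L) \geq \cN(\Phi, W_L(0); \gamma_0)$ implies, after dividing by $L^d$ and passing to the limit, that $C \geq c_{\gamma_0}$, which is strictly positive by Proposition \ref{prop:pos}.

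Finally, since $c_{L,\gamma} \to c_\gamma$ and $b_0(\Phi_L)/L^d \to C > 0$ in probability, the continuous mapping theorem (applied to the map $(x,y) \mapsto x/y$ on the open half-plane $y > 0$) yields
$$ a_{L,\gamma} = \frac{c_{L,\gamma}}{b_0(\Phi_L)/L^d} \xrightarrow{\;\PP\;} \frac{c_\gamma}{C} =: a_\gamma, $$
and $a_\gamma > 0$ since both $c_\gamma > 0$ and $C > 0$. The main subtlety — rather than a genuine obstacle — is the recognition that the ergodic-sandwich machinery of Proposition \ref{prop:const} does not require the ``decoration'' by a homotopy type and applies verbatim to the raw component count. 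This avoids any need to interchange limits with the (potentially infinite) sum $\sum_\gamma c_\gamma$ or to verify uniform integrability across homotopy types.
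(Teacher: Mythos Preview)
Your proposal is correct and follows essentially the same approach as the paper: the paper likewise sets $c_L := b_0(\Phi_L)/L^d$, notes that the proof of Proposition~\ref{prop:const} applies verbatim to the unrestricted component count to give $c_L \to c$, observes $c \geq c_\gamma > 0$ via Proposition~\ref{prop:pos}, and concludes $a_{L,\gamma} = c_{L,\gamma}/c_L \to c_\gamma/c =: a_\gamma$. Your explicit invocation of the continuous mapping theorem is a slightly cleaner justification of this last step than the paper's one-line assertion.
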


\begin{proof}
Define
$$c_L := \frac{b_0(\Phi_L)}{L^d}, \quad \quad c := \lim_{L\to \infty} c_L.$$
Note that the existence of the limit defining $c$ follows from applying the same proof as in Proposition \ref{prop:const} to the sequence $c_L$ (thus counting components with no restriction on homotopy type). 
We write the coefficient
$a_{L,\gamma}$ as:
\begin{align}
a_{L,\gamma} &= \frac{L^d}{b_0(\Phi_L)}\frac{\cN(\Phi,W_L(0),\gamma)}{L^d}\\
 &= \frac{c_{L,\gamma}}{c_L},
\end{align}
By the convergence statements in Proposition \ref{prop:const},
and since $c>0$
(which follows from Proposition \ref{prop:pos} since $c \geq c_\gamma$),
we have $a_{L,\gamma}=\frac{c_{L, \gamma}}{c_L}$ converges in $L^1$ to a constant $a_\gamma = \frac{c_\gamma}{c}$ as $L \rightarrow \infty$, and the positivity of the coefficients $a_\gamma$ follows from the positivity of $c_\gamma$ stated in Proposition \ref{prop:pos}.
\end{proof}

\subsection{The homotopy limit law: Finishing the proof of Theorem \ref{thm:main}}\label{sec:main}

Finally, we upgrade the mode of convergence (from  coefficient-wise convergence to convergence in probability respecting total variation distance) in order to establish the limit law for the sequence of measures $\mu_L$.

Proposition \ref{prop:coeff}
establishes the convergence 
$a_{L,\gamma} \rightarrow a_\gamma$
in probability as $L \rightarrow \infty$ of each of the coefficients $a_{L,\gamma}$
appearing in the measure $\mu_L$.
Let 
$$\mu = \sum a_\gamma \delta_{\gamma}$$
denote the measure constructed from the list of limiting coefficients provided by Proposition \ref{prop:coeff}.

In order to prove convergence in probability 
$\mu_L \rightarrow \mu$
of the sequence of measures, we need the following lemma.

\begin{lemma}[Topology does not leak to infinity]\label{lemma:notleak}
For every $\delta> 0$
there exists a finite set $g \subset \G$
and $L_0>0$ such that for all $L \geq L_0$
$$ \EE \sum_{\gamma \in g^c} c_{L,\gamma} < \delta.$$
\end{lemma}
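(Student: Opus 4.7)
The plan is to prove a strictly stronger \emph{deterministic} statement: for every $\delta>0$ there exists a finite set $g\subset\G$ such that
$$\sum_{\gamma \in g^c} c_{L,\gamma} \leq \delta$$
holds pointwise on the sample space for every $L\geq 1$. The expectation bound in the lemma then follows immediately, with $L_0$ equal to anything (say $L_0=1$).

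The key construction is to bin homotopy types by the cube count of a minimal representative. For each integer $M\geq 1$, set
$$g_M := \bigl\{ [s] \in \G : s \text{ is a connected union of at most } M \text{ closed unit cubes centered on } \ZZ^d \bigr\}.$$
First I would observe that $g_M$ is finite: the number of connected polycubes of volume at most $M$ is finite up to translation, and each contributes a single homotopy type. Next I would argue that any connected component $s$ of $\Phi$ with $[s] \in g_M^c$ must consist of more than $M$ unit cubes --- otherwise, by the very definition of $g_M$, we would have $[s] \in g_M$, a contradiction.

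With this in hand, the bound reduces to a pure volume argument. The window $W_L(0)$ contains exactly $L^d$ unit cubes; since the components of $\Phi_L$ are pairwise disjoint, the number of them containing strictly more than $M$ cubes is at most $L^d/(M+1)$. Summing over the offending homotopy types,
$$\sum_{\gamma \in g_M^c} \cN(\Phi, W_L(0); \gamma) \leq \frac{L^d}{M+1},$$
and dividing by $L^d$ yields $\sum_{\gamma \in g_M^c} c_{L,\gamma} \leq 1/(M+1)$, deterministically. Given $\delta>0$, I would take $M \geq \lceil 1/\delta \rceil$ and $g := g_M$.

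I do not anticipate a real obstacle here: the lemma is a soft combinatorial fact stating that ``complicated'' homotopy types require many cubes while the viewing window has bounded volume. In particular, no probabilistic input (beyond trivially taking the expectation of an already-deterministic inequality), no appeal to the ergodic theorem, and no percolation-theoretic information is needed, and the conclusion is uniform in the coloring probability $p$.
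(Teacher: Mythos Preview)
Your proposal is correct and follows essentially the same approach as the paper: both prove the stronger deterministic statement by binning homotopy types according to the minimal number of cubes in a representative and then using the volume bound that at most $L^d/M$ components of size exceeding $M$ can fit in $W_L(0)$. The only cosmetic difference is notation ($M$ versus the paper's $S$) and that you keep track of the $+1$ in the denominator.
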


\begin{proof}
Let $\delta > 0$
be arbitrary.
We prove the (stronger) deterministic statement that there exists a
finite set $g \subset \G$
such that 
$$ \sum_{\gamma \in g^c} c_{L,\gamma} < \delta.$$
holds.
Choose an integer $S > 1 / \delta$, and let $g \subset \G$ denote the set of
homotopy types that have at least one representative with size
no larger than $S$.
This set is finite, and
we have 
\begin{align*}
\sum_{\gamma \in g^c} c_{L,\gamma} &=  \sum_{\gamma \in g^c} \frac{ \cN(\Phi,W_L(0);\gamma)}{L^d} \\
&\leq \frac{L^d/S}{L^d} = \frac{1}{S} 
< \delta,
\end{align*}
as desired.
\end{proof}

The rest of the proof
that $\mu_L \rightarrow \mu$ in probability, and that $\mu$ is indeed a probability measure,
is purely measure theoretic and now follows from \cite[Lemma 2.4, Proposition 3.4]{ALL}.

The statement that the support of $\mu$ is all of $\G$ follows from the positivity of $a_\gamma$ stated in Proposition \ref{prop:coeff}.

\section{Dependence of $\mu$ on $p$}\label{sec:perc}

In this section we will occasionally utilize the same notation $c_{L,\gamma}, c_\gamma, c, a_\gamma$ that appeared throughout Section \ref{sec:main} in the construction of $\mu$.

\subsection{Exponential tail for $p$ below the percolation threshold}\label{sec:exptail}

In this section, we prove Theorem \ref{thm:exptailmu}, essentially as a consequence of the following result from percolation theory \cite{Grimmett}.

Let $C_0$ denote the (possibly empty) connected component of $\Phi$ that contains the interior of the cube $B_1(0)$, and let $|C_0|$ denote its size (the number of unit cubes in $C_0$).

\begin{thm}\label{thm:exptailperc}
Suppose $0<p<p_c$.
There exists $\lambda = \lambda(p)$ such that
\begin{equation}\label{eq:exptailperc}
\PP \{ |C_0| \geq n \} \leq e^{- \lambda \cdot n}, \quad \text{ for all } n \geq 1.
\end{equation}
\end{thm}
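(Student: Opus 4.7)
The plan is to derive this statement from the sharpness of the phase transition for Bernoulli site percolation on $\ZZ^d$, a classical fact due to Menshikov (with an independent bond-percolation argument of Aizenman--Barsky, and a more recent streamlined proof by Duminil-Copin--Tassion). Sharpness asserts that for $0<p<p_c$ there is $\phi=\phi(p)>0$ with
\[
\PP\bigl(0 \leftrightarrow \partial B_r(0)\bigr) \leq e^{-\phi r}, \qquad r \geq 1,
\]
and I would take this as a black box, since its proof is substantial and lies entirely outside the present setting.

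Given sharpness, the first step is to sum connection probabilities over shells to get finite mean cluster size,
\[
\chi(p) := \EE[|C_0|] \;=\; \sum_{x \in \ZZ^d} \PP(0 \leftrightarrow x) \;\leq\; \sum_{r=0}^\infty |\partial B_r(0)| \cdot e^{-\phi r} \;<\; \infty,
\]
since the shells grow only polynomially in $r$. The second step is to upgrade finite mean to finite exponential moments. I would follow the standard cluster-exploration approach: explore the open cluster of the origin one site at a time in a canonical order and use a decoupling argument (for instance, a BK-style independence between an ``initial segment'' and a ``continuation'' of the exploration) to produce a submultiplicative recursion of the form $\PP(|C_0| \geq n+m) \leq C \cdot \PP(|C_0| \geq n) \cdot \PP(|C_0| \geq m)$. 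A Fekete-type argument then yields $\PP(|C_0| \geq n) \leq e^{-\lambda n}$ for some $\lambda > 0$. An alternative, perhaps cleaner, route is to study the generating function $F(s) := \EE[s^{|C_0|}]$ and show via an Aizenman--Newman-style differential inequality that its radius of convergence exceeds $1$.

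The main obstacle is the sharpness input itself: the naïve Peierls bound $\PP(|C_0|=n) \leq a_n \, p^n \leq (\kappa(d)\, p)^n$, where $a_n$ counts rooted lattice animals of size $n$ and $\kappa(d)$ is Klarner's constant, only yields exponential decay when $p < 1/\kappa(d)$, which is strictly below $p_c$ in every dimension. Closing this gap all the way up to $p_c$ is precisely the deep content of the Menshikov--Aizenman--Barsky theorem; once that ingredient is granted, the remaining manipulations are routine and appear explicitly in Chapter 6 of Grimmett's \emph{Percolation}.
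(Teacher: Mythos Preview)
The paper does not prove this statement at all; it simply quotes it as \cite[Thm.~6.75]{Grimmett}, remarking that although Grimmett states it for bond percolation the same proof carries over \emph{mutatis mutandis} to site percolation. Your outline (sharpness of the phase transition $\Rightarrow$ $\chi(p)<\infty$ $\Rightarrow$ exponential decay of $\PP\{|C_0|\ge n\}$ via a BK/submultiplicativity or Aizenman--Newman argument) is precisely the content of that reference, so you are not diverging from the paper but rather expanding the citation it invokes.
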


Note that the result in \cite[Thm. 6.75]{Grimmett} is actually stated for bond percolation (as opposed to site percolation considered here) on a cubical lattice, but the same proof applies \emph{mutatis mutandis} to the setting of site percolation.

\begin{proof}[Proof of Theorem \ref{thm:exptailmu}]
As explained in Remark \ref{rmk:cubicalcomplex} the set $\Phi$ is homotopy equivalent to a cubical complex.
The connected component corresponding to $C_0$ has $|C_0|$ many vertices.  The number of $k$-dimensional faces incident on each vertex is bounded by $2^k\binom{d}{k}$.
This gives the rough estimate
$\beta_k(C_0) \leq 2^k \binom{d}{k} |C_0|$
for the $k$th Betti number.  Here we are
estimating $\beta_k(C_0)$ by the dimension of the span of $k$-cycles (without considering the kernel of the boundary operator), and we are estimating the dimension of the span of $k$-cycles by the number of $k$-faces (the $k$-faces provide a spanning set for the space of $k$-cycles).


Hence, 
$$\PP \{ \beta_k(C_0) \geq n \} \leq
\PP \left\{ |C_0| \geq c(k,d) \cdot n \right\},$$
where $c(k,d):=2^{-k}\binom{d}{k}^{-1}$,
and an application of Theorem \ref{thm:exptailperc} gives
\begin{equation}\label{eq:decaylemma}
\PP \{ \beta_k(C_0)  \geq n \} \leq e^{- \lambda \cdot c(k,d) \cdot   n}, \quad \text{ for all } n \geq 1.
\end{equation}
We will apply this to estimate the measure $\mu(T_n)$ of the tail set $T_n$. Recall its definition:
\begin{equation}
T_n := \{ \gamma \in \G : \beta_k(\gamma) \geq n \}.
\end{equation}
First we write
\begin{align*}
\mu(T_n) = \sum_{\gamma \in T_n} a_\gamma &= \frac{1}{c} \sum_{\gamma \in T_n} c_\gamma \\
&= \frac{1}{c} \sum_{\gamma \in T_n} \lim_{L \rightarrow \infty} \EE c_{L,\gamma} \\
&\leq \frac{1}{c} \liminf_{L \rightarrow \infty} \sum_{\gamma \in T_n}  \EE c_{L,\gamma},
\end{align*}
where we have used Fatou's lemma in the last step.
Using the monotone convergence theorem to reverse the order of summation and expectation we then obtain
\begin{align*}
\frac{1}{c} \liminf_{L \rightarrow \infty} \sum_{\gamma \in T_n}  \EE c_{L,\gamma} &= \frac{1}{c} \liminf_{L \rightarrow \infty} \EE \sum_{\gamma \in T_n}  c_{L,\gamma} \\
&= \frac{1}{c} \liminf_{L \rightarrow \infty} \EE \frac{\cN(\Phi,W_L(0);T_n)}{L^d},
\end{align*}
where we use $\cN(\Phi,W_L(0);T_n)$
to denote the number of components
completely contained in $W_L(0)$ with type belonging to $T_n$.
We next use the rough estimate
\begin{align*}
\EE \cN(\Phi,W_L(0);T_n) &\leq  \sum \PP \{ \beta_k(C_x) \geq n \} \\
&= L^d \PP \{ \beta_k(C_0) \geq n \},
\end{align*}
where $C_x$ denotes the connected component of $\Phi$ containing the interior of the unit cube $B_1(x)$, the sum is over all points $x$ such that $B_1(x) \subset W_L(0)$,
and the second line follows from translation invariance.
Combining this with the above estimate for $\mu(T_n)$ we obtain
$$\mu(T_n) \leq \frac{1}{c} \PP \{ \beta_k(C_0) \geq n \},$$
and the desired exponential decay now follows from (\ref{eq:decaylemma}).
\end{proof}

\begin{remark}\label{rmk:mu0}
We note that the above proof of Theorem \ref{thm:exptailmu} also gives information on another interesting probability measure associated with homotopy types, the probability measure $\mu_0$ for the random variable given by the homotopy type of the component $C_0$.  Namely, we have that when $p<p_c$ the probability measure $\mu_0$ satisfies the exponential tail estimate given in \eqref{eq:decaylemma}.
\end{remark}

\subsection{Sub-exponential tail for $p$ above the percolation threshold}\label{sec:subexptail}

In this section we prove Theorem \ref{thm:subexptailmu} by
adapting methods from percolation theory
\cite[Sec. 8.6]{Grimmett}
while combining them with topological considerations.

The lower bound in Theorem \ref{thm:subexptailmu}
will be established while focusing attention on special clusters, referred to here as ``egg sac clusters'',
that possess a particular structure consisting of a large double-layer wall (inner layer completely black, outer layer completely white) surrounding an abundance of ``eggs'' attached to the inner layer of the wall by ``umbilical paths'', with each egg consisting of a completely black shell surrounding an ``embryonic $k$-cycle'' attached to the egg shell by a single path (see Figure \ref{fig:eggsac}).

Let us make this terminology precise.
For fixed $k$, we will refer to the part of $\Phi$ in the box $B_7(x)$ as an \emph{egg} if
the following conditions are satisfied:

\begin{itemize}
    \item $\Phi \cap B_7(x)$ is connected,
    \item the cubes in the shell $S_7(x)$ are all black,
    \item the set
 $\Phi \cap B_3(x+(2,2,...,2))$ has a single nontrivial $k$-cycle,
    \item the inner shell $S_5(x+(1,1,...,1)$ has exactly one black cube.
\end{itemize}
In that case we will refer to the $k$-cycle as an \emph{embryonic $k$-cycle}.
The surrounding structure of the egg will serve two purposes: (i) to insulate the embryonic $k$-cycle and prevent it from becoming a boundary and (ii) to
facilitate an estimate on the probability of the event that a given egg is attached to the sac wall by an umbilical path.

\begin{figure}[ht]
    \centering
    \includegraphics[scale=0.34]{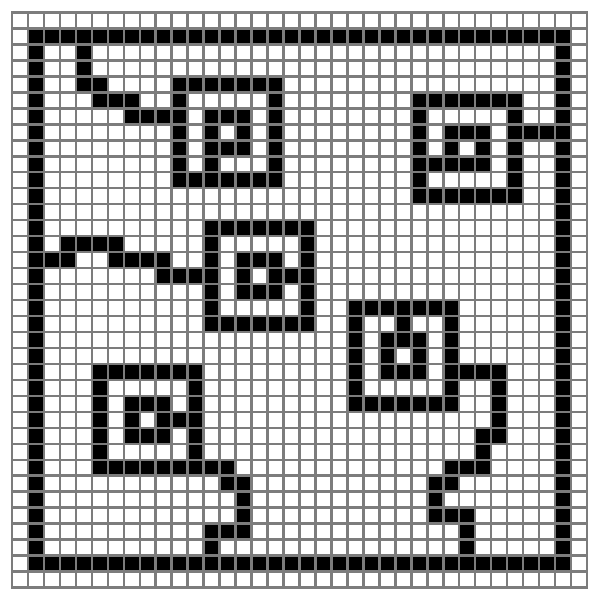}
    \caption{Anatomy of an egg sac cluster (idealized rendition conveying only the essential features): A finite cluster with large $k$th Betti number formed by a double-layer wall (with inner layer completely black, outer layer completely white) surrounding an abundance of eggs connected to the inner layer of the wall by umbilical paths. Each egg includes an embryonic $k$-cycle.
    }
    \label{fig:eggsac}
\end{figure}

Take the covering $\Phi\cap B_5(x+(1,1,...,1)$ and $\Phi\cap B_3(x+(2,2,...,2)^c$. We may apply the Mayer-Vietoris sequence to this covering, and, using the condition that $S_5(x+(1,1,...,1))$ has exactly one (contractible) black cube, we conclude that the embryonic $k$-cycle makes a positive contribution to the $k$th Betti number of the connected component it belongs to (cf. \cite[Sec. 2.2, Exercise 31]{Hatcher}). This will allow us to estimate from below the $k$th Betti number of an egg sac cluster by the number of eggs it has.

We note in passing that in the case $k=d-1$ one could simply work with embryonic $k$-cycles throughout this section (relying on Alexander duality) and omit the additional insulating structure of the egg.

Fix a parameter $0<\rho<1$.
Given a box $B_L(x)$
we will say that it \emph{has an egg sac cluster with abundance rate $\rho$} if the following conditions are satisfied
\begin{itemize}
    \item The outer layer $S_L(x)$ is completely white.
    \item The inner layer
    $S_{L-2}(x+(1,1,...,1))$
    is completely black.
    \item The number of eggs in 
    $B_{L-4}(x+(2,2,...,2))$ that are connected to $S_{L-2}(x+(1,1,...,1))$
    by a path (referred to as an \emph{umbilical path}) through $\Phi$ is at least $\rho \cdot L^d$.
\end{itemize}

The condition that the outer layer $S_L(x)$ of the egg sac wall is completely empty ensures that the egg sac cluster is finite.
The condition that the inner layer
    $S_{L-2}(x+(1,1,...,1))$
of the wall is completely black ensures that the eggs connected to this layer by umbilical paths are all part of the same connected component.

\begin{lemma}\label{lemma:egg}
Let $\phi(p)$ denote the
probability that the
box $B_7(0)$ has an egg that is connected to $\{\infty\}$ by a path through $\Phi$.
Then $\phi(p)>0$ whenever $p_c < p < 1$.
\end{lemma}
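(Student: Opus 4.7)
The plan is to bound $\phi(p)$ below by the probability of a pair of independent events: event (i), that the configuration inside $B_7(0)$ realizes a valid egg pattern, and event (ii), that the outer shell $S_7(0)$ is joined to $\infty$ by a path of black cubes whose centers all lie in $\ZZ^d\setminus\{0,1,\ldots,6\}^d$. Since the egg is specified by a condition on site colors inside $\{0,\ldots,6\}^d$ and event (ii) depends only on sites outside that box, the two events are independent, and on their intersection the egg is manifestly in the same component of $\Phi$ as an unbounded set.

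To handle event (i), I would exhibit an explicit egg configuration for each admissible pair $(k,d)$: take every cube of $S_7(0)$ black; inside $S_5((1,\ldots,1))$ declare exactly one ``umbilical'' cube black, chosen face-adjacent to $S_7(0)$ (for instance the cube at lattice point $(1,3,\ldots,3)$); attach to the umbilical cube a short black chain running into the inner box $B_3((2,\ldots,2))$; inside $B_3$ include a small embryonic $k$-cycle, e.g.\ the boundary of a $(k{+}1)$-dimensional sub-box; and leave every other cube in $B_7(0)$ white. By construction all four conditions in the definition of an egg are met, and since the event prescribes the colors of finitely many sites it has probability at least $p^{N_1}(1-p)^{N_2}>0$ for appropriate finite constants $N_1,N_2$.

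For event (ii), fix the outer-shell cube $B_1((6,3,\ldots,3))$ together with its exterior neighbor $B_1((7,3,\ldots,3))$, and consider the event that the latter lies in an infinite cluster of the site percolation restricted to the half-space $H=\{x\in\ZZ^d:x_1\geq 7\}$. For $p>p_c$, half-space site percolation on $\ZZ^d$ with $d\geq 2$ has the same critical value $p_c$ and thus produces an infinite cluster with positive density; this is standard and can be derived from finite-energy arguments together with uniqueness of the infinite cluster in $\ZZ^d$ (see \cite{Grimmett}). In particular this event occurs with positive probability, and it forces $B_1((6,3,\ldots,3))$, and hence all of the connected shell $S_7(0)$, to be joined to $\infty$ by a path lying outside $B_7(0)$.

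Combining the two ingredients via independence gives $\phi(p)\geq\PP(\text{(i)})\,\PP(\text{(ii)})>0$. The only nontrivial ingredient is the invariance of the critical probability under restriction to a half-space, which I expect to be the main technical point; everything else reduces to a finite combinatorial construction and the standard observation that the color configurations inside and outside $\{0,\ldots,6\}^d$ are independent.
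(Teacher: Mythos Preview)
Your decomposition into an ``inside'' event (i) and an ``outside'' event (ii), together with independence, is exactly the structure of the paper's proof. The paper's events $E_1$ and $E_2$ match your (i) and (ii), and the estimate for (i) is handled the same way (fix a specific egg configuration; it has probability at least $p^b(1-p)^w>0$).

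The one substantive difference is in your treatment of (ii). The paper avoids half-space percolation entirely via a simple truncation argument: the event $\{|C_0|=\infty\}$ has probability $\theta(p)>0$, and on this event any infinite path from $B_1(0)$ must make a \emph{last} exit from $B_7(0)$; the tail of the path after that exit is an infinite black path lying in the complement of $B_7(0)$ and starting from a cube adjacent to $S_7(0)$. This gives $\PP(E_2)\geq\theta(p)$ with no additional input beyond the definition of $p_c$. Your route through half-space percolation is correct in outcome, but it imports a genuinely nontrivial theorem (the equality $p_c^{\mathrm{half\text{-}space}}=p_c$, which for the full-strength statement relies on slab-type arguments \`a la Grimmett--Marstrand / Barsky--Grimmett--Newman), and your parenthetical that this ``can be derived from finite-energy arguments together with uniqueness of the infinite cluster'' does not reflect how that result is actually proved. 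The paper's last-exit trick is both more elementary and self-contained, and it is worth internalizing for future use.
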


\begin{proof}[Proof of Lemma \ref{lemma:egg}]
We will show that
\begin{equation}\label{eq:philower}
\phi(p) \geq p^{7^d}(1-p)^{7^d}\theta(p),    
\end{equation} 
where $\theta(p)$ denotes the probability
that $|C_0| = \infty$.
This will establish the desired positivity of $\phi(p)$
since $\theta(p)>0$ for $p_c < p < 1$
(by definition of $p_c$).

Let us view the event in the statement of the lemma as an intersection
of the event $E_1$ that $B_7(0)$ is an egg
and the event $E_2$ that there is a path to $\{ \infty \}$ through the complement of $B_7(0)$ starting from some cube adjacent to $B_7(0)$, i.e., a cube in $S_9((-1,-1,...,-1))$ that shares a face with one of the cubes in $S_7(0)$.
If the events $E_1$ and $E_2$ both occur then the egg provided by $E_1$ is connected to $\{ \infty \}$ by a path through $\Phi$, since the occurrence of $E_1$ means the shell $S_7(0)$ is completely black, so that the path provided by the event $E_2$ can be extended to produce a path to $\{\infty\}$ that begins in the shell $S_7(0)$.
The events $E_1$ and $E_2$ are independent since they concern disjoint sets of cubes in the coloring of the cubical grid.  

In order to give a rough estimate for the probability of the event $E_1$, choose a specific coloring of $B_7(0)$  verifying the conditions to be an egg and let $b$ and $w$ denote the number of black and white cubes respectively.
Then we have 
\be\label{eq:E1}
P(E_1) \geq p^b(1-p)^w > p^{7^d}(1-p)^{7^d}.
\ee

Next notice that the probability of the event $E_2$ satisfies
\be\label{eq:E2}
\PP (E_2) \geq \theta(p),
\ee
since the occurrence of the event $|C_0| = \infty$ means there exists a path $P$ through $\Phi$ from
$B_1(0)$ to $\{ \infty\}$
which, by considering an appropriate truncation of $P$, implies the event $E_2$.
Indeed, truncating the path $P$,
keeping the part after the step where $P$ last exits $B_7(0)$, provides a path to $\{ \infty \}$ through the complement of $B_7(0)$ starting from some cube adjacent to the box $B_7(0)$ as described in the event $E_2$.

The two estimates \eqref{eq:E1}, \eqref{eq:E2} together with the independence of the events $E_1$ and $E_2$ establish (\ref{eq:philower}) and prove the lemma.
\end{proof}

Let $X_L$ denote the number of eggs contained in the box $B_{L-4}((2,2,...,2))$
that have a path through
$\Phi$ to $S_{L-4}((2,2,...,2))$.

\begin{lemma}\label{lemma:eggcount}
Suppose that $p_c<p<1$.
Then for $L$ sufficiently large
$$\PP \left\{ X_L \geq \frac{\phi(p)}{2} L^d \right\} \geq \frac{\phi(p)}{3}.$$ 
\end{lemma}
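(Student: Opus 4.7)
My plan is to combine a first-moment lower bound with a reverse Markov argument. Write $X_L = \sum_x Y_x$, where $x$ ranges over lattice points with $B_7(x) \subset B_{L-4}((2,2,\dots,2))$ and $Y_x$ is the indicator that the cubes in $B_7(x)$ form an egg connected through $\Phi$ to the shell $S_{L-4}((2,2,\dots,2))$. The number of admissible positions is $N_L = (L-10)^d = L^d(1-o(1))$ as $L \to \infty$, and trivially $X_L \leq N_L$.

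The key probabilistic input is Lemma \ref{lemma:egg}. For every admissible $x$, translation invariance gives that the probability that the cubes in $B_7(x)$ form an egg joined through $\Phi$ to $\{\infty\}$ equals $\phi(p)$. On that event, any path realising the connection to infinity must eventually leave the box $B_{L-4}((2,2,\dots,2))$, so truncating it at the first exit produces a path through $\Phi$ from the egg to a cube of the shell $S_{L-4}((2,2,\dots,2))$. Thus the event from Lemma \ref{lemma:egg} is contained in $\{Y_x = 1\}$, and summing over $x$ yields $\EE X_L \geq (L-10)^d\,\phi(p)$.

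With the mean bounded from below, I would apply a reverse Markov inequality. Let $A := \{X_L \geq (\phi(p)/2)L^d\}$ and split the expectation according to whether $A$ occurs, using the deterministic bound $X_L \leq L^d$:
\begin{equation*}
(L-10)^d\,\phi(p) \;\leq\; \EE X_L \;\leq\; L^d\,\PP(A) + \tfrac{\phi(p)}{2}L^d\,\PP(A^c) \;\leq\; L^d\,\PP(A) + \tfrac{\phi(p)}{2}L^d.
\end{equation*}
Rearranging gives $\PP(A) \geq \phi(p)\bigl((L-10)^d/L^d - 1/2\bigr)$, whose right-hand side tends to $\phi(p)/2$ as $L \to \infty$; hence $\PP(A) \geq \phi(p)/3$ for all $L$ sufficiently large, which is exactly the conclusion of the lemma.

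I do not foresee any serious obstacle. The argument uses only translation invariance, linearity of expectation, and the elementary geometric fact that a path escaping to infinity from the interior of a box must cross its shell. The one point worth noting is that eggs at nearby positions $x$ may have incompatible colour constraints, but this dependence is irrelevant: both the first-moment computation and the reverse Markov step are insensitive to the joint distribution of the indicators $Y_x$.
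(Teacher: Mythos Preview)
Your proposal is correct and follows essentially the same route as the paper: the paper introduces an auxiliary count $R_L$ of eggs connected to $\{\infty\}$, computes $\EE R_L = \phi(p)(L-11)^d$, and applies the identical reverse Markov split $\EE R_L \leq L^d\,\PP(A) + \tfrac{\phi(p)}{2}L^d$, then uses $X_L \geq R_L$; your version simply folds the ``connected-to-infinity implies connected-to-shell'' observation directly into the indicator $Y_x$ rather than naming $R_L$ separately.
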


\begin{proof}[Proof of Lemma \ref{lemma:eggcount}]
Let $R_L$ denote
the number of eggs contained in $B_{L-4}((2,2,...,2))$ that are joined to $\{\infty\}$
by a path in $\Phi$.
It is clear that $X_L \geq R_L$, and hence in order to prove the lemma it suffices to show that
\begin{equation}\label{eq:R_Lprob}
\PP \left\{ R_L \geq \frac{\phi(p)}{2} L^d \right\} \geq \frac{\phi(p)}{3}.
\end{equation}

Writing $R_L$ as a sum of indicator variables,
where the sum is over all boxes $B_7(x_k)$
contained in $B_{L-4}((2,2,...,2))$,
we compute the average
\begin{equation}\label{eq:R_Lexp}
    \EE R_L = \phi(p) (L-11)^d \geq \frac{5}{6} \phi(p) L^d,
\end{equation}
where the inequality holds for all $L$ sufficiently large.
The following estimate for the expectation $\EE R_L$ comes from considering the
event appearing in (\ref{eq:R_Lprob}) and its complementary event.
\begin{align*}
\EE R_L &\leq L^d \PP \left\{ R_L \geq \frac{\phi(p)}{2}L^d \right\} + \frac{\phi(p)}{2} L^d \PP \left\{ R_L \leq \frac{\phi(p)}{2}L^d \right\} \\
&\leq L^d \PP \left\{ R_L \geq \frac{\phi(p)}{2}L^d \right\} + \frac{\phi(p)}{2} L^d.
\end{align*}
Together with (\ref{eq:R_Lexp})
this gives
\begin{equation}
\frac{\phi(p)}{3} L^d \leq L^d \PP \left\{ R_L \geq \frac{\phi(p)}{2}L^d \right\},
\end{equation}
which implies (\ref{eq:R_Lprob}) and proves the lemma.
\end{proof}

\begin{prop}\label{prop:subexptailmu}
Suppose $p_c<p<1$. 
There exists $0 < \rho = \rho(p) < 1$ and $\eta_0 = \eta_0(p) > 0$ such that
\begin{equation}\label{eq:subexptail}
\PP \{ B_{L}(0) \text{ has an egg sac cluster with abundance rate $\rho$} \} \geq \exp\left\{ - \eta_0 \cdot L^{(d-1)} \right\},
\end{equation}
for all $L$ sufficiently large.
\end{prop}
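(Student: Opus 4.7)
The plan is to realize the egg sac configuration as the intersection of three independent events, the first two of which pay the full subexponential cost while the third has probability bounded below by a positive constant independent of $L$. Let $A_1$ be the event that the outer shell $S_L(0)$ is completely white, $A_2$ the event that the inner shell $S_{L-2}((1,1,\ldots,1))$ is completely black, and $A_3'$ the event that at least $\rho L^d$ eggs $B_7(x_k)\subset B_{L-4}((2,2,\ldots,2))$ are joined to $S_{L-4}((2,2,\ldots,2))$ by a path lying entirely inside $B_{L-4}((2,2,\ldots,2))$, where I take $\rho = \phi(p)/2$. These three events are measurable with respect to colorings of the three disjoint cube sets $S_L(0)$, $S_{L-2}((1,1,\ldots,1))$, and $B_{L-4}((2,2,\ldots,2))$, so they are mutually independent.

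Each shell has at most $2dL^{d-1}$ unit cubes, which gives $\PP(A_1) \geq (1-p)^{2dL^{d-1}}$ and $\PP(A_2) \geq p^{2dL^{d-1}}$, both of the form $\exp(-O(L^{d-1}))$. To control $A_3'$ I would revisit the proof of Lemma \ref{lemma:eggcount}, which establishes $\PP\{R_L \geq (\phi(p)/2) L^d\} \geq \phi(p)/3$, where $R_L$ counts eggs in $B_{L-4}((2,2,\ldots,2))$ joined to $\infty$ through $\Phi$. Any path from such an egg to $\infty$ must cross $S_{L-4}((2,2,\ldots,2))$, so truncating the path at its first exit from the interior box produces a path that remains inside $B_{L-4}((2,2,\ldots,2))$ and reaches $S_{L-4}((2,2,\ldots,2))$. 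Consequently the number of eggs with interior paths is at least $R_L$, and so $\PP(A_3') \geq \phi(p)/3 > 0$, a constant independent of $L$.

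To close the argument, I would verify that $A_1 \cap A_2 \cap A_3'$ is contained in the event $F$ that $B_L(0)$ has an egg sac cluster with abundance rate $\rho$. The outer-wall and inner-wall conditions in the definition of an egg sac cluster are exactly $A_1$ and $A_2$. Under $A_2$, every cube in $S_{L-4}((2,2,\ldots,2))$ is adjacent across a face to a cube in the all-black $S_{L-2}((1,1,\ldots,1))$, so any interior path from an egg to $S_{L-4}((2,2,\ldots,2))$ can be extended by a single step into the inner shell, giving an umbilical path through $\Phi$ from the egg to $S_{L-2}((1,1,\ldots,1))$. Hence $A_3'$ supplies at least $\rho L^d$ eggs joined to the inner wall by umbilical paths, and by independence
$$\PP(F) \geq \PP(A_1)\,\PP(A_2)\,\PP(A_3') \geq \exp(-\eta_0 L^{d-1})$$
for a suitable $\eta_0 = \eta_0(p) > 0$.

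The main subtlety in executing this plan is the refinement of Lemma \ref{lemma:eggcount} described in the second paragraph: one needs the abundance estimate to arise from paths that remain in the interior box, so that the abundance event is independent of the events shaping the walls. The path-truncation observation handles this cleanly, after which the rest of the proof is a clean decomposition paying an $\exp(-O(L^{d-1}))$ wall cost and a constant abundance cost.
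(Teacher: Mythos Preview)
Your proof is correct and essentially matches the paper's argument: the paper also sets $\rho=\phi(p)/2$, groups the two shell events into a single event $V_2$, pairs it with $V_1=\{X_L\geq(\phi(p)/2)L^d\}$, invokes Lemma~\ref{lemma:eggcount} for $\PP(V_1)\geq\phi(p)/3$, and bounds $\PP(V_2)\geq[p(1-p)]^{2dL^{d-1}}$. The only difference is cosmetic---you split $V_2$ into $A_1,A_2$ and make explicit (via path truncation) that $X_L$ depends only on $\Phi\cap B_{L-4}((2,\ldots,2))$, which is precisely what justifies the independence step the paper asserts without comment.
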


\begin{proof}[Proof of Proposition \ref{prop:subexptailmu}]
We take $\rho = \phi(p)/2$.
The event 
\begin{equation}\label{eq:EL} E_L:= \{ B_{L}(0) \text{ has an egg sac cluster with abundance rate $\rho$} \} \end{equation}
appearing in \eqref{eq:subexptail} 
contains the intersection of the event
$$V_1 := \left\{ X_L \geq \frac{\phi(p)}{2} L^d \right\},$$
and the event
$V_2$ that the shell $S_{L-2}((1,1,...,1)$
is completely black and the shell
$S_{L}(0)$ is completely white.


Using independence of the events $V_1$ and $V_2$ we estimate
\begin{align*}
\PP E_L &\geq \PP V_1 \cap V_2 \\
 &= \PP V_1 \PP V_2 \\
 &\geq \frac{\phi(p)}{3} \PP V_2,
\end{align*}
where we applied Lemma \ref{lemma:eggcount} in the last line of the above.
Combining this with the elementary estimate,
\begin{align*}
\PP V_2 &\geq p^{2dL^{d-1}}(1-p)^{2dL^{d-1}} \\
&= [p^{2d}(1-p)^{2d}]^{L^{d-1}},
\end{align*}
proves the desired subexponential lower bound, which can be written as
$$\PP E_L \geq \frac{\phi(p)}{3} \exp\left\{ - \eta_1 L^{d-1} \right\}, \quad \eta_1 = -\log [p^{2d}(1-p)^{2d}],$$
and by replacing $\eta_1$ by a slightly larger  $\eta_0$ we may omit the constant factor to arrive at (\ref{eq:subexptail})
for all $L$ sufficiently large.
\end{proof}

\begin{proof}[Proof of Theorem  \ref{thm:subexptailmu}]
We wish to estimate the measure $\mu(T_n)$ of the tail set $T_n = T_n(k,d)$, which we recall is defined as
\begin{equation}
T_n := \{ \gamma \in \G : \beta_k(\gamma) \geq n \}.
\end{equation}
First, we write (again utilizing the notation and results from Section \ref{sec:coeff})
\begin{align*}
\mu(T_n) = \sum_{\gamma \in T_n} a_\gamma &= \frac{1}{c} \sum_{\gamma \in T_n} c_\gamma \\
&= \frac{1}{c} \sum_{\gamma \in T_n} \lim_{L \rightarrow \infty} \EE c_{L,\gamma} \\
&\geq \frac{1}{c} \sum_{\gamma \in S_n} \lim_{L \rightarrow \infty} \EE c_{L,\gamma},
\end{align*}
where $S_n \subset T_n$ is the finite set of types $\gamma \in T_n$ that can be realized by some connected cubical complex contained in $B_\ell(0)$, where 
$\ell = \lceil (n/\rho)^{1/d} \rceil$ with $\rho$ generated by applying Proposition \ref{prop:subexptailmu}.
We can place the limit and expectation outside the summation in the last line of the above displayed equations since the set $S_n$ is finite.  Then we have
\begin{align*}
\frac{1}{c} \sum_{\gamma \in S_n} \lim_{L \rightarrow \infty} \EE c_{L,\gamma}
&= \frac{1}{c} \lim_{L \rightarrow \infty} \EE \sum_{\gamma \in S_n}  c_{L,\gamma} \\
&= \frac{1}{c} \lim_{L \rightarrow \infty} \EE \frac{\cN(\Phi,W_L(0);S_n)}{L^d},
\end{align*}
where $\cN(\Phi,W_L(0);S_n)$ denotes the number of components
completely contained in $W_L(0)$ with type belonging to $S_n$. 
Hence, from the above two sets of displayed lines we have the estimate
\be\label{eq:T_nLB}
\mu(T_n) \geq \frac{1}{c} \lim_{L \rightarrow \infty} \EE \frac{\cN(\Phi,W_L(0);S_n)}{L^d}.
\ee
If the box $B_{\ell}(x)$ contains a connected component of $\Phi$ that is an egg sac cluster with abundance rate $\rho$, then the $k$th Betti number $\beta_k$ of that cluster is at least $n$ so that its homotopy type is contained in $T_n$, and hence also in $S_n$ (since the cluster is contained in a box of size $\ell$).
Using this observation, we next estimate
\begin{align*}
\EE \cN(\Phi,W_L(0);S_n) &\geq \sum \PP \{ B_\ell(x) \text{ has an egg sac cluster with abundance $\rho$} \} \\
&= (L-\ell)^d \PP \{ E_\ell \},
\end{align*}
where the sum is over all points $x$ such that $B_\ell(x) \subset W_L(0)$,
the second line follows from translation invariance,
and $E_\ell$ denotes the event for the case $x=0$ as defined in (\ref{eq:EL}).
Combining this with the above estimate \eqref{eq:T_nLB} for $\mu(T_n)$
and applying Proposition \ref{prop:subexptailmu}, we obtain
\begin{align*}
\mu(T_n) &\geq \frac{1}{c} \PP \{ E_\ell \} \\
&\geq \frac{1}{c} \exp \left\{ -\eta_0 \ell^{d-1} \right\} \\
&\geq \frac{1}{c}  \exp \left\{ -\frac{\eta_0 2^{d-1}}{\rho^{(d-1)/d}} n^{(d-1)/d} \right\},
\end{align*}
which proves the statement in the theorem.
\end{proof}

\begin{remark}
Continuing the point made in Remark \ref{rmk:mu0}, we note that our results in the supercritical regime also give information on 
the probability measure $\mu_0$ for the distribution of homotopy types of the random cluster $C_0$.
Namely, it follows from Proposition \ref{prop:subexptailmu}, that $\mu_0$ has a subexponential tail for $p > p_c$.
Indeed, by translation invariance, we may apply the proposition while replacing the box $B_L(0)$ with $B_L(-1,-1,...,-1)$, so that the associated egg sac cluster coincides with the connected component $C_0$.  Then \eqref{eq:subexptail} provides a subexponential lower bound on the tail decay of $\mu_0$.
\end{remark}

\subsection{Continuity of $c_\gamma$ with respect to $p$}\label{sec:continuity}

Recall that in \cite{HiTs} Hiraoka and Tsunoda proved uniform continuity (with respect to the coloring probability $p$) of each of the limiting Betti numbers defined as follows: 
$$\hat{\beta}_{q}(p):= \limsup_{n\to\infty} \frac{\EE[\beta_q(X(p)\cap W_{2n}(0)]}{|W_{2n}(0)|}, \quad p \in [0,1],$$ 
 where $\beta_q(X(p)\cap W_{2n}(0))$ is the $q$th Betti number of the restriction to $W_{2n}$ of a random cubical complex $X(p)$ that is defined in \cite{HiTs} and is analogous to $\Phi$. Using a similar method, we can show continuity of each coefficient $c_\gamma$ in the homotopy limit law for $\Phi$.

\begin{prop}
Fix a homotopy type $\gamma\in\G$,
and let $c_\gamma$ be as defined in Proposition \ref{prop:const}. 
As a function of the coloring probability $p \in [0,1]$,
$c_\gamma$ is Lipschitz continuous.
\end{prop}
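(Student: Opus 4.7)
The plan is to adapt the coupling strategy used by Hiraoka--Tsunoda in \cite{HiTs} for Betti numbers: I will realize the site percolations at two different parameters on a common probability space, show that a single-site color flip changes the homotopy count $\cN(\Phi, W_L(0);\gamma)$ by at most an absolute constant $K = K(d)$ independent of $L$ and $\gamma$, and then combine this with a binomial bound on the number of disagreement sites inside the relevant window. To this end, assign an independent uniform $U_x \in [0,1]$ to each $x \in \ZZ^d$ and set $Z_p := \{x : U_x \leq p\}$, with $\Phi_p$ built from $Z_p$ as in the introduction. For $p_1 < p_2$ the disagreement set $D := Z_{p_2} \setminus Z_{p_1}$ consists exactly of those $x$ with $U_x \in (p_1,p_2]$.

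\textbf{Localization and bounded differences.} Since a component entirely contained in $W_L(0)$ is determined by the coloring on $W_L(0)$ together with its one-cube outer shell $W_L^+(0)$, only disagreements in this enlarged window can affect $\cN(\Phi,W_L(0);\gamma)$, and $|D \cap W_L^+(0)|$ is Binomial with mean at most $(L+2)^d(p_2-p_1)$. I would then prove the key bounded-difference lemma: toggling the color of any single site $x$ changes $\cN(\Phi, W_L(0);\gamma)$ by at most $K = K(d)$. The combinatorial input is that the unit cube $B_1(x)$ has at most $2d$ face-neighbors, so the only components of $\Phi$ whose existence, membership in $W_L(0)$, or homotopy type can be altered by the toggle are the (at most $2d$) components incident to those neighbors, together with $B_1(x)$ itself; the toggle merges some of these into one (if $x$ becomes black) or splits one into several (if $x$ becomes white), so the $\gamma$-labeled count changes by at most some $K(d) \leq 2d+1$.

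\textbf{Conclusion.} Flipping the disagreement sites one at a time and summing the bounded-difference estimate yields
$$|\cN(\Phi_{p_2}, W_L(0);\gamma) - \cN(\Phi_{p_1}, W_L(0);\gamma)| \leq K \cdot |D \cap W_L^+(0)|.$$
Taking expectation, dividing by $L^d$, and invoking Proposition \ref{prop:const} (which yields $\EE c_{L,\gamma}(p) \to c_\gamma(p)$) in the limit $L \to \infty$ produces the Lipschitz bound $|c_\gamma(p_2) - c_\gamma(p_1)| \leq K (p_2 - p_1)$, with Lipschitz constant depending only on the dimension $d$.

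\textbf{Main obstacle.} The delicate step is the bounded-difference lemma: establishing that $K$ is genuinely independent of both $L$ and $\gamma$. One must carefully account not only for merges and splits of face-adjacent components under a single toggle, but also for the subtler case in which the flip changes whether a previously interior component becomes connected to the exterior of $W_L(0)$ (and thus drops out of the count), while ensuring that the number of distinct homotopy classes that can simultaneously gain or lose a component from a single site flip is bounded by a function of $d$ alone.
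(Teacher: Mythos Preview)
Your proposal is correct and follows essentially the same route as the paper: the paper also couples all percolations via i.i.d.\ uniforms, bounds the change in $\cN(\Phi,W_L(0);\gamma)$ under a single site flip by $2d$, sums over the disagreement sites, takes expectation, normalizes by $L^d$, and passes to the limit using Proposition~\ref{prop:const}. Your treatment is in fact slightly more careful than the paper's in two respects: you correctly enlarge the window to $W_L^+(0)$ to capture flips on the outer shell that might connect an interior component to the exterior (the paper sums only over $W_L(0)$), and you explicitly flag the boundary-containment subtlety in the bounded-difference lemma; neither point affects the limit since $(L+2)^d/L^d\to 1$, so the final Lipschitz constant remains $O(d)$ either way.
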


\begin{proof}

Let $\hat{f}\colon \ZZ^d\to [0,1]$,
with the point evaluations $\hat{f}(z)$ being independent random variables sampled uniformly from $[0,1]$. Note that $\hat{f}$'s sub-level filtration forms a ``coupling" for the random variables, $Z(p)$. Recall $Z(p)$ denotes the random set of colored points in $\ZZ^d$. Namely, for each $p\in [0,1]$, the sublevel set $\hat{f}^{-1}([0,p])$ has the same distribution of probability as $Z(p)$. 

Let $L>0$. For any $0 \leq p_1 < p_2 \leq 1$, we have (utilizing the notation $\cN(\cdot,\cdot;\cdot)$ in Definition \ref{def:N}):
\be\label{eq:estchange}
|\mathcal{N}(\Phi(p_1),W_L(0);\gamma) - \mathcal{N}(\Phi(p_2),W_L(0);\gamma)| \leq 2d\sum_{b\in W_L(0)} I_{\{p_1<\hat{f}(b)\leq p_2\}},
\ee
where $I_{\{p_1<\hat{f}(b)\leq p_2\}}$ denotes the indicator function.
Indeed, the sum in the right hand side of (\ref{eq:estchange}) counts the number of new cubes that get colored from $p_1$ to $p_2$, and each new cube can change the count $\cN(\Phi,W_L(0);\gamma)$ by at most $2d$, the number of faces of an individual cube. To change a component's homotopy type requires the new cube to be adjacent to the component  on one of its $2d$ faces.
Now we normalize by $L^d$ and apply expectation in (\ref{eq:estchange}) to get:
\begin{align*}
\EE\bigg|\frac{\mathcal{N}(\Phi(p_1),W_L(0);\gamma)}{L^d} - \frac{\mathcal{N}(\Phi(p_2),W_L(0);\gamma)}{L^d}\bigg| &\leq \EE\bigg( \frac{2d}{L^d} \sum_{b\in W_L(0)} I_{\{p_1<\hat{f}(b)\leq p_2\}} \bigg) \\
&= 2d|p_2 - p_1|.
\end{align*}

Combining this with the estimate
\begin{align*}
|c_{L,\gamma}(p_1) - c_{L,\gamma}(p_2)| &= \bigg|\EE\bigg(\frac{\mathcal{N}(\Phi(p_1),W_L(0);\gamma)}{L^d}\bigg) - \EE\bigg(\frac{\mathcal{N}(\Phi(p_2),W_L(0);\gamma)}{L^d}\bigg)\bigg| \\
&\leq \EE\bigg|\frac{\mathcal{N}(\Phi(p_1),W_L(0);\gamma)}{L^d} - \frac{\mathcal{N}(\Phi(p_2),W_L(0);\gamma)}{L^d}\bigg|
\end{align*}
gives
$$|c_{L,\gamma}(p_1) - c_{L,\gamma}(p_2)| \leq 2d |p_2-p_1|,$$
and taking the limit as $L\to\infty$ yields the desired Lipschitz continuity for $c_\gamma$. \end{proof}


\subsection{Empirical results}\label{sec:empirical}

For the computational results, we restrict ourselves to $d=2$ dimensions. In this case, $\mathcal{G}$ consists of precisely the homotopy types of finite wedges of circles $S^1$:
$$\text{For }k\in\NN, \; \gamma(k) := \bigg[\bigvee_{i=1}^k S^1 \bigg], \quad \gamma(0):=[\{(0,0)\}].$$ The goal is to compute approximations of the coefficients in the limiting homotopy measure (however, we will consider a modified version of the homotopy count explained below). To do this, we sample a number of colorings $\Phi$ and analyze $\frac{\mathcal{N}(\Phi,W_L(0);\gamma)}{L^d}$ for some large $L$. We will use MatLab and the homology software Perseus \cite{perseus}
to detect these homotopy types, since homology is sufficient to sort wedges of $S^1$.

Our algorithm works as follows. First, fix a large $L$, and populate each element of a $L\times L$ matrix $1$ with probability $p$, and $0$ otherwise. This represents the lattice of points in $\ZZ^2$ colored black or white. Then, we perform a breadth-first search algorithm on untested grid elements, until we have isolated each connected component. The component is then passed to the Perseus software to determine $\beta_1$ of the component, which is sufficient to classify the homotopy type. We will apply this algorithm several times for a mesh of probabilities on $p\in(0,1)$. Finally, we aggregate the homotopy counts.

\begin{table}[h!]
\begin{center}
\begin{tabular}{|cl|l|l|l|l|l|l|l|}
\hline
\multicolumn{9}{|c|}{Values of $100\hat{c}_{\gamma(k)}(p)$}                                \\ \hline
\multicolumn{1}{|l}{}                    & \multicolumn{8}{c|}{$k$}                                        \\ \cline{2-9} 
\multicolumn{1}{|c|}{\multirow{6}{*}{$p$}} &  & 0 & 1 & 2 & 3 & 4 & 5 & 10 \\ \cline{2-9} 
\multicolumn{1}{|c|}{}                   & 0.25  & 12.8951 & 0.0168 & 0.0004 & 0.0000 & 0.0000 & 0.0000 & 0.0000 \\ \cline{2-9} 
\multicolumn{1}{|c|}{}                   & 0.40  & 10.4106 & 0.2170 & 0.0462 & 0.0130 & 0.0046 & 0.0019 & 0.0001 \\ \cline{2-9} 
\multicolumn{1}{|c|}{}                   & 0.55  & 4.1136  & 0.1415 & 0.0619 & 0.0370 & 0.0253 & 0.0198 & 0.0062 \\ \cline{2-9} 
\multicolumn{1}{|c|}{}                   & 0.5920  & 2.7856  & 0.0720 & 0.0384 & 0.0096 & 0.0048 & 0.0016 & 0.0032 \\ \cline{2-9} 
\multicolumn{1}{|c|}{}                   & 0.5925  & 2.6880  & 0.0704 & 0.0256 & 0.0080 & 0.0064 & 0.0080 & 0.0032 \\ \cline{2-9} 
\multicolumn{1}{|c|}{}                   & 0.5930  & 2.5360  & 0.0608 & 0.0176 & 0.0128 & 0.0096 & 0.0064 & 0.0016 \\ \cline{2-9} 
\multicolumn{1}{|c|}{}                   & 0.70  & 0.7734  & 0.0025 & 0.0006 & 0.0002 & 0.0002 & 0.0000 & 0.0000 \\ \cline{2-9} 
\multicolumn{1}{|c|}{}                   & 0.85  & 0.0498  & 0.0000 & 0.0000 & 0.0000 & 0.0000 & 0.0000 & 0.0000 \\ \hline
\end{tabular}
\label{table:1}
\caption{
A table of approximations for $100\hat{c}_{\gamma(k)(p)}$ on a $250\times 250$ grid averaged over 20 iterates of the algorithm. The limited window size gives the zero values for high $k$, even though Proposition \ref{prop:pos} states that $\hat{c}_{\gamma(k)}(p)>0$ for any choice of $ p\in (0,1)$ and $ k\in \NN$. The values are multiplied by $100$ to counteract the strong normalization by window volume. 
}
\end{center}
\end{table}

\begin{table}[ht]
\begin{center}
\begin{tabular}{|cl|l|l|l|l|l|l|l|}
\hline
\multicolumn{9}{|c|}{Values of $\hat{a}_{\gamma(k)}(p)$}                                                                              \\ \hline
\multicolumn{1}{|l}{}                    & \multicolumn{8}{c|}{$k$}                                        \\ \cline{2-9} 
\multicolumn{1}{|c|}{\multirow{6}{*}{$p$}} &  & 0 & 1 & 2 & 3 & 4 & 5 & 10 \\ \cline{2-9} 
\multicolumn{1}{|c|}{}                   & 0.25  & 0.9987 & 0.0013 & 0.0000 & 0.0000 & 0.0000 & 0.0000 & 0.0000 \\ \cline{2-9} 
\multicolumn{1}{|c|}{}                   & 0.40  & 0.9735 & 0.0203 & 0.0043 & 0.0012 & 0.0004 & 0.0002 & 0.0000 \\ \cline{2-9} 
\multicolumn{1}{|c|}{}                   & 0.55  & 0.9088 & 0.0313 & 0.0137 & 0.0082 & 0.0056 & 0.0044 & 0.0014 \\ \cline{2-9} 
\multicolumn{1}{|c|}{}                   & 0.5920  & 0.9396  & 0.0243 & 0.0130 & 0.0032 & 0.0016 & 0.0005 & 0.0011 \\ \cline{2-9} 
\multicolumn{1}{|c|}{}                   & 0.5925  & 0.9344  & 0.0245 & 0.0089 & 0.0028 & 0.0022 & 0.0028 & 0.0011 \\ \cline{2-9} 
\multicolumn{1}{|c|}{}                   & 0.5930  & 0.9451  & 0.0227 & 0.0066 & 0.0048 & 0.0036 & 0.0024 & 0.0006 \\ \cline{2-9} 
\multicolumn{1}{|c|}{}                   & 0.70  & 0.9933  & 0.0032 & 0.0008 & 0.0002 & 0.0002 & 0.0000 & 0.0000 \\ \cline{2-9} 
\multicolumn{1}{|c|}{}                   & 0.85  & 0.9688  & 0.0000 & 0.0000 & 0.0000 & 0.0000 & 0.0000 & 0.0000 \\ \hline
\end{tabular}
\label{table:2}
\caption{
A table of approximations for $\hat{a}_{\gamma(k)(p)}$ on a $250\times 250$ grid averaged over 20 iterates of the algorithm. Again, the limited window size gives the zero values for high $k$, despite Proposition \ref{prop:pos}. 
}
\end{center}
\end{table}

Perseus computes the homology of the unions of closed cubes rather than the interior of unions of closed cubes. 
Thus, while the algorithm ignores corner connections while separating components, it includes corner connections while computing $\beta_1$ for each component.
The proof in Section \ref{sec:coeff} can easily be adapted to show the existence of a limiting homotopy measure $\hat{\mu}$ based on this hybrid method of counting.  Let  $\hat{a}_{\gamma_k}$ denote the coefficients in such limiting measure $\hat{\mu}$ and $\hat{c}_{\gamma(k)}$ denote the associated coefficients analogous to $c_{\gamma(k)}$ (based on normalizing by window size rather than total number of components). 
While the numerical values of the coefficients $\hat{c}_{\gamma(k)}$ will certainly differ from those of $c_{\gamma(k)}$, we expect that the two lists of coefficients share qualitative features such as the interesting trends depicted in the figures below.

\begin{figure}[ht]
    \centering
    \includegraphics[scale=0.45]{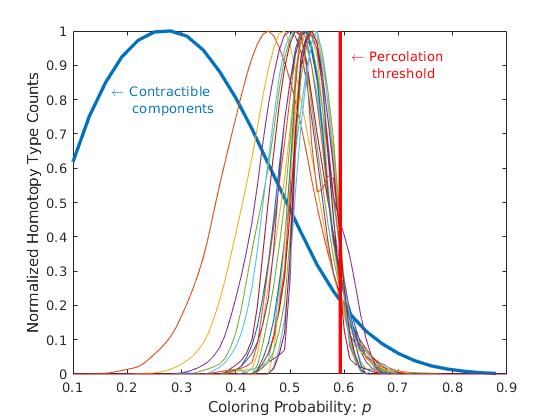}
    \caption{Each curve is a plot of the number of connected components with homotopy type a wedge of $k$ circles, with $k$ ranging from $0$ to $20$.  These have been normalized so that the maxima all have the same height.
    The positions where the maxima occur appears to form an increasing sequence.  
    }
    \label{fig:normalized}
\end{figure}

\begin{figure}[ht]
    \centering
    \includegraphics[scale=0.45]{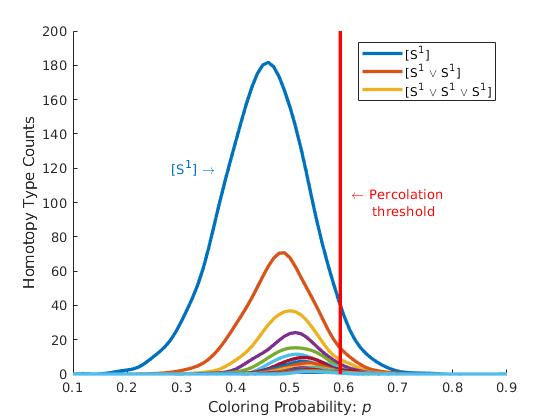}
    \caption{The homotopy type counts plotted (as a function of $p$) for each non-contractible homotopy type. The contractible homotopy type for this figure is omitted only because the number of contractible components dwarfs non-contractible components. Each homotopy type count appears to be unimodal with a maximal count attained at a value $p$ to the left of the percolation threshold $p_c$ and moving to the right as the homotopy type increases.  See Figure \ref{fig:normalized} for normalized plots depicting this trend.
    }
    \label{fig:toscale}
\end{figure}

\begin{figure}[ht]
    \centering
    \includegraphics[scale=0.5]{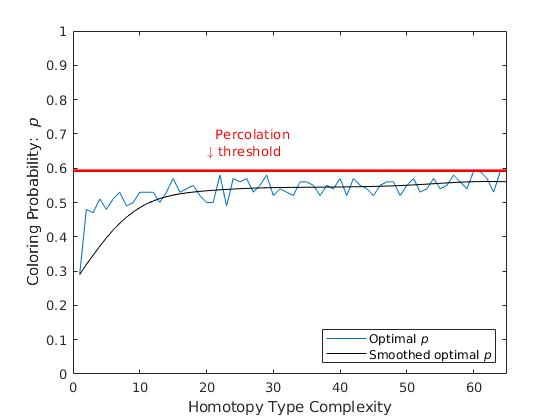}
    \caption{The probability $p$ at which the homotopy type count of wedges of $k$ copies of $S^1$ attains its maximum plotted against $k$, called homotopy type complexity. $k$ ranges from $0$ to $65$.
    We conjecture that this sequence of optimal probabilities is monotone and converges to the percolation threshold.
    }
    \label{fig:optimalp}
\end{figure} 

For our simulation, we chose $L = 250$ and tested probabilities from $0.1$ to $0.9$ with spacing $0.03$. Then, we averaged homotopy type counts over 20 iterations of the algorithm. Figures \ref{fig:normalized}, \ref{fig:toscale} plot these results. For figure $\ref{fig:optimalp}$, we chose a finer mesh with spacing $0.01$, but only took one iteration of the algorithm.





Observe that the probability where the $k$th homotopy type attains its maximal density appears below the percolation threshold $p_c$, and as $k$ gets larger, this sequence of ``optimal probabilities'' seems to increase.  We conjecture that this sequence converges monotonically to $p_c$. 
Our intuition is that the infinite component has countably infinite homotopy group generators, so as the number of homotopy group generators tends to infinity, we expect the optimal probability to approach the threshold. 

\section{Concluding remarks}\label{sec:concl}

\begin{itemize}
    \item[(i)] In light of \cite{LerarioMulas} it may be possible to generalize Theorem \ref{thm:main} to obtain a limit law for isotopy types while using an adaptation of the methods of Section \ref{sec:coeff}.
    It would also be interesting to study aspects of ambient isotopy of collections of components such as linking and nesting of components.
    \item[(ii)] It is natural to ask whether results analogous to Theorems \ref{thm:exptailmu}
    and \ref{thm:subexptailmu} hold
    in the setting of random geometric complexes \cite{ALL} or in the setting of random nodal domains \cite{SarnakWigman}.
   In order to make this question more precise, let us first consider a random geometric complex (built over a Poisson process in Euclidean space) and recall that the parameter analogous to $p$ is the radius $\alpha$ of the neighborhoods used to determine the simplices in the complex (see \cite{ALL}).
   The existence of a percolation threshold $\alpha=\alpha_c$ is known from the theory of continuuum percolation \cite{Penrose}, and we can ask whether the limiting homotopy measure
   described in \cite{ALL} has an exponential tail for $\alpha<\alpha_c$ and a subexponential tail for $\alpha>\alpha_c$.
   Concerning the setting of random nodal sets \cite{SarnakWigman},
   we note that Bogomolny and Schmit \cite{BogSch} have argued that sublevel domains  $\Omega_T:=\{x \in \RR^2 : \psi(x) < T \}$ of a Gaussian monochromatic random wave function $\psi$ can be regarded as a type of percolation process with
   the special case $T=0$ of nodal domains corresponding to critical percolation
   and the nonzero levels $T \neq 0$ corresponding to noncritical percolation.
   This suggests studying the
   distribution of homotopy types of the connected components of sublevel sets $\Omega_T$ while comparing 
   the tail decay rate for $T<0$ versus $T>0$.
    \item[(iii)]
    An open problem of particular interest is to study $\mu$ \emph{at the critical probability $p=p_c$}.  
    Percolation models at criticality have a diverging ``correlation length'', and the clusters exhibit structures more intricate than in either of the non-critical phases.
    In particular, for the random cubical complex at $p=p_c$ we conjecture a power-law tail decay of the homotopy measure $\mu$ (slower decay rate than either of the non-critical regimes addressed in Theorems \ref{thm:exptailmu} and \ref{thm:subexptailmu}).
    A pragmatic step might be to specialize this problem to the case of $d=2$ dimensions and instead consider critical percolation on the hexagonal grid (in light of progress surrounding Smirnov's breakthrough \cite{Smirnov}, this may even lead to explicit calculation of a ``critical exponent'' associated to homotopy tail decay).
    \item[(iv)] The above problem of studying tail decay of homotopy types in critical percolation fits into a more programmatic problem, to study topological features of critical phenomena (with percolation models at criticality being just one prototype).
    Some motivation for this problem is provided by the following strikingly topological description of water at ``critical opalescence'' (a critical state between liquid and gas) from \cite[Sec. 4.5]{Crowther}
    
    ``...within the bubbles of steam float liquid water droplets, and these droplets appear full of bubbles of steam, which, in turn, appear full of little droplets of liquid, and so on...''
    
    This description beckons us to study the topological complexity of critical phenomena
    and to look beyond homology and even homotopy to consider ambient \emph{isotopy types} of collections of components (the proper framework to capture nesting of components).
\end{itemize}

\bibliographystyle{abbrv}
\bibliography{cubical}




\end{document}